
\documentclass[11pt]{amsart}
\usepackage{amsfonts,latexsym,amsthm,amssymb,graphicx}
\usepackage[all]{xy}
\usepackage{hyperref}

\setlength{\textwidth}{6 in}
\setlength{\textheight}{9.5 in}
\setlength{\topmargin}{-0.25in}
\setlength{\oddsidemargin}{0in}
\setlength{\evensidemargin}{0in}

\DeclareFontFamily{OT1}{rsfs}{}
\DeclareFontShape{OT1}{rsfs}{n}{it}{<-> rsfs10}{}
\DeclareMathAlphabet{\mathscr}{OT1}{rsfs}{n}{it}

\newtheorem{theorem}{Theorem}[section]
\newtheorem*{theorem*}{Theorem}
\newtheorem{lemma}[theorem]{Lemma}
\newtheorem{corol}[theorem]{Corollary}
\newtheorem{prop}[theorem]{Proposition}
\newtheorem{claim}[theorem]{Claim}

{\theoremstyle{definition} \newtheorem{defin}[theorem]{Definition}}
{\theoremstyle{remark} \newtheorem{remark}[theorem]{Remark}
\newtheorem{example}[theorem]{Example}}

\newcommand{\Pbb}{{\mathbb{P}}}

\newcommand{\Zbb}{{\mathbb{Z}}}
\newcommand{\uBl}{\underline{B\ell}}

\newcommand{\cE}{{\mathscr E}}

\newcommand{\cL}{{\mathscr L}}
\newcommand{\cO}{{\mathscr O}}
\newcommand{\hi}{{\hat\imath}}
\newcommand{\hZ}{{\widehat Z}}
\newcommand{\uC}{\underline C}
\newcommand{\uE}{\underline E}
\newcommand{\trho}{\tilde\rho}
\newcommand{\Til}[1]{{\widetilde{#1}}}

\newcommand{\cf}{{c_{\text{F}}}}
\newcommand{\qede}{\hfill$\lrcorner$}

\DeclareMathOperator{\rk}{rk}
\DeclareMathOperator{\codim}{codim}
\DeclareMathOperator{\Vol}{Vol}


\title{
The Segre zeta function of an ideal
}
\author{Paolo Aluffi}
\address{
Mathematics Department, 
Florida State University,
Tallahassee FL 32306, U.S.A.
}
\email{aluffi@math.fsu.edu}

\begin{document}

\begin{abstract}
We define a power series associated with a homogeneous ideal in a
polynomial ring, encoding information on the Segre classes defined
by extensions of the ideal in projective spaces of arbitrarily high
dimension. We prove that this power series is rational, with poles
corresponding to generators of the ideal, and with numerator of bounded
degree and with nonnegative coefficients. We also prove that this 
`Segre zeta function' only depends on the integral closure of the ideal.

The results follow from good functoriality properties of the `shadows' of 
rational equivalence classes of projective bundles.  More precise results 
can be given if all homogeneous generators have the same degree, and 
for monomial ideals.

In certain cases, the general description of the Segre zeta function given here 
leads to substantial improvements in the speed of algorithms for the computation 
of Segre classes. We also compute the projective ranks of a nonsingular variety 
in terms of the corresponding zeta function, and we discuss the Segre zeta 
function of a local complete intersection of low codimension in projective space.
\end{abstract}

\maketitle


\section{Introduction}\label{intro}
Let $k$ be a field, and let $I\subseteq k[x_0,\dots,x_n]$ be a
homogeneous ideal.  In this paper we consider a formal power series with
integer coefficients determined by $I$,
\begin{equation}\label{ex:deze}
\zeta_I(t)=\sum_{i\ge 0} a_i t^i\quad,
\end{equation}
which we call the {\em Segre zeta function\/} of $I$.  This series is
characterized by the fact that for all $N\ge n$, the class
\[
(a_0 + a_1 H + \cdots + a_N H^N)\cap [\Pbb^N]\quad,
\]
where $H$ denotes the hyperplane class, equals the push-forward to
$\Pbb^N$ of the Segre class $s(Z_N,\Pbb^N)$ of the subscheme $Z_N$
defined by the extension of $I$ to $k[x_0,\dots, x_N]$. It is not difficult to
verify (Lemma~\ref{lem:wd}) that this prescription does determine a unique 
power series as in~\eqref{ex:deze}. The main result of the paper states that
$\zeta_I(t)$ is rational and gives some {\em a priori\/} information on its
poles and its `numerator'. 

Our main motivation comes from the theory of Segre classes.  The {\em
  Segre class\/} of an embedding of schemes is a basic ingredient in
modern (Fulton-MacPherson) intersection theory, with applications to,
among others, enumerative geometry and the computation of invariants
of singularities.  (See \cite{85k:14004} for a thorough treatment of Segre
classes and for many applications.) In practice, concrete computations of
Segre classes are often very challenging, even in comparatively
simple situations. Interesting applications would follow easily if one
could compute the Segre class $s(Z,\Pbb^n)$ of a scheme in $\Pbb^n$,
but even in this restricted context the range of known techniques to
compute Segre classes is essentially limited to their definition and to
reverse-engineering enumerative consequences. Algorithms for their
computation have been developed along these lines and implemented 
(\cite{MR1956868}, \cite{EJP}, \cite{MR3385954}, \cite{harrisFMP}), but
will only deal with small examples. To our knowledge, the
only class of ideals for which an alternative strategy has been developed
is the case of monomial schemes (\cite{MR3070865}, \cite{Scaiop}, and
see~\S\ref{ss:mono} in this paper); but this case also has limited applicability.

The Segre zeta function~\eqref{ex:deze} may be viewed as a `generating
function' for Segre classes. Therefore, general properties of $\zeta_I(t)$ 
translate into properties of Segre classes, which could lead to more tools 
for their computation and a better understanding of these invariants. 
Our main result is the following (see Theorem~\ref{thm:main} for a slightly
extended version).

\begin{theorem*}
Let $I\subseteq k[x_0,\dots, x_n]$ be a homogeneous ideal, and let
$f_0,\dots, f_r$ be a set of homogeneous generators of $I$. Then the
Segre zeta function $\zeta_I(t)$ is {\em rational,\/} with poles at
$-1/d_j$ for a subset $\{d_j\}_{j\in J}$ of $\{\deg
f_i\/\}_{i=1,\dots,r}$. More precisely,
\[
\zeta_I(t) = \frac{P(t)}{(1+(\deg f_0) t)\cdots (1+(\deg f_r) t)}\quad, 
\]
with $P(t)\in \Zbb[t]$ a polynomial with nonnegative coefficients, 
trailing term of degree $\codim I$, and leading term 
$(\prod_i \deg f_i) t^{r+1}$.
\end{theorem*}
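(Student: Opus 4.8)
The plan is to reinterpret the subscheme and then transport the computation into a projective bundle. First I would observe that $Z_N=V(f_0,\dots,f_r)\subseteq\Pbb^N$ is precisely the zero scheme of the section $\sigma=(f_0,\dots,f_r)$ of the bundle $\cE=\bigoplus_{i=0}^r\cO_{\Pbb^N}(d_i)$, where $d_i=\deg f_i$. Dualizing the generators gives a surjection $\bigoplus_i\cO(-d_i)\twoheadrightarrow\cI_{Z_N}$, so the blow-up $B=\Proj(\Rees(\cI_{Z_N}))$ embeds as a closed subscheme $j\colon B\hookrightarrow\Pbb(\cE)$, with exceptional divisor $\Til{E}$ pulled back (up to a twist by $p^*\cO(d_i)$-type classes) from the tautological class on $\Pbb(\cE)$. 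Writing $p\colon B\to\Pbb^N$ and using the standard blow-up expression for the Segre class (\cite{85k:14004}), I would record
\[
i_*s(Z_N,\Pbb^N)=p_*\Bigl(\tfrac{\Til{E}}{1+\Til{E}}\cap[B]\Bigr).
\]

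Next I would push this forward through the bundle. Since $p=\pi\circ j$ with $\pi\colon\Pbb(\cE)\to\Pbb^N$, the projection formula turns the displayed class into $\pi_*$ of a power series in the tautological class $\xi$ capped with $j_*[B]$. The pushforwards $\pi_*(\xi^\bullet)$ are the Segre classes of $\cE$, whose generating function is $\prod_i 1/(1+d_iH)$; this is exactly where the denominator $\prod_i(1+(\deg f_i)t)$ is forced, and it resums the a priori infinite tail of $\tfrac{\Til{E}}{1+\Til{E}}$ into a rational expression $i_*s(Z_N,\Pbb^N)=P_N(H)/\prod_i(1+d_iH)$ with $P_N$ a \emph{polynomial} of degree at most $r+1$ (the shadow of a class in a rank-$(r+1)$ bundle). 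The functoriality of these `shadows' under the linear embeddings $\Pbb^n\hookrightarrow\Pbb^N$, along which $\cE$, $\sigma$ and $B$ all restrict compatibly, shows $P_N$ is independent of $N$ for $N\ge n$; comparing with the power series produced by Lemma~\ref{lem:wd} then identifies $\zeta_I(t)=P(t)/\prod_i(1+d_it)$, giving rationality and the asserted poles.

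It remains to extract the three properties of $P$. The trailing term is the easiest: the lowest-degree (highest-dimensional) part of $i_*s(Z_N,\Pbb^N)$ is the fundamental class of the top-dimensional part of $Z_N$, which sits in codimension $\codim I$, and capping with $c(\cE)=1+\cdots$ cannot lower this, so $P$ has trailing degree $\codim I$. For the leading term, the resummation already exhibits $P$ as a polynomial of degree $\le r+1$, and I would pin the top coefficient through the part of the computation governed by the top Chern class $c_{r+1}(\cE)=(\prod_i d_i)H^{r+1}$, which forces the leading term $(\prod_i d_i)t^{r+1}$, in agreement with the complete-intersection case, where one checks directly that $P(t)=(\prod_i d_i)t^{r+1}$.

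The hard part will be nonnegativity of the coefficients of $P$. The numerator satisfies $P(H)\cap[\Pbb^N]=c(\cE)\cap i_*s(Z_N,\Pbb^N)$, and the plan is to realize this class as the shadow of the effective cycle $[B]$, so that positivity becomes an instance of the general principle that the shadow of an effective class, built from $\pi_*$ against the nef relative hyperplane class of the ample bundle $\cE=\bigoplus_i\cO(d_i)$, has nonnegative coefficients. Making this precise, ensuring that the alternating signs of $\tfrac{\Til{E}}{1+\Til{E}}$ and of the denominator cancel to leave a manifestly effective combination, equivalently identifying the coefficients with nonnegative multiplicities of distinguished varieties of the section $\sigma$, is the crux on which the whole statement turns; everything else is bookkeeping with Chern and Segre classes in the projective bundle.
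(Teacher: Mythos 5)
Your reduction of rationality, the denominator, the degree bound, and the leading and trailing terms is essentially sound, and it takes a somewhat different (and in one respect simpler) route than the paper: realizing $Z_N$ as the zero scheme of the section $(f_0,\dots,f_r)$ of $\cE=\oplus_i\cO_{\Pbb^N}(d_i)$, embedding $B=B\ell_{Z_N}\Pbb^N$ in $\Pbb(\cE)$, and using the birational-invariance formula for Segre classes yields $\iota_{N*}s(Z_N,\Pbb^N)=P_N(H)/\prod_i(1+d_iH)$ in $\Zbb[H]/(H^{N+1})$ with $P_N(H)=\prod_i(1+d_iH)-\uBl(H)$, and $\deg P_N\le r+1$ because the shadow $\uBl$ of $[B]$ only has components in codimension $\le r$; this also gives the top coefficient $\prod_i d_i$ for free, since $\uBl$ cannot contribute in degree $r+1$. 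One caveat: your appeal to ``functoriality of shadows under the linear embeddings $\Pbb^n\hookrightarrow\Pbb^N$'' is not literally available --- restriction to a linear subspace is neither flat nor a pushforward in the useful direction, so neither Lemma~\ref{lem:shapf} nor Lemma~\ref{lem:shapb} applies --- but the stabilization you need is exactly the truncation property proved in Lemma~\ref{lem:wd} via splayedness (\cite[Lemma~4.1]{MR3415650}), which you cite: two polynomials of degree $\le r+1$ agreeing modulo $H^{N+1}$ with $N\ge r+1$ coincide, so $P_N$ is independent of $N$. This bypasses the paper's projection/join machinery (Theorem~\ref{thm:join} and Corollary~\ref{cor:stronid}), whose extra strength is not needed for the bare statement of the theorem. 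The trailing term is Remark~\ref{rem:less}, as you say.

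The genuine gap is the nonnegativity of $P$, and your sketch for it contains a concrete misstep on top of the acknowledged incompleteness. First, $c(\cE)\cap\iota_{N*}s(Z_N,\Pbb^N)$ is \emph{not} the shadow of the effective cycle $[B]$: by Lemma~\ref{lem:sfroms} it equals $c(\cE)\cap[\Pbb^N]-\uBl$, i.e., the \emph{complementary} summand in the decomposition $c(\cE)\cap[\Pbb^N]=\uBl+\uE$ of the paper's Remark following that lemma, and effectivity of $[B]$ says nothing about the complement of its shadow. The class you want is $\uE$, the shadow of the exceptional divisor (an effective class supported over $Z_N$). Second, even after this correction, the positivity mechanism you invoke fails as stated: in Fulton's convention the relative hyperplane class $\xi=c_1(\cO(1))$ on $\Pbb(\oplus_i\cO(d_i))$ with $d_i>0$ is \emph{not} nef --- it restricts to $-d_iH$ on the section determined by the subbundle $\cO(d_i)\hookrightarrow\cE$ --- so ``pushing an effective class against a nef class'' is unavailable, and $c(\cO(-1))^{-1}=\sum_k\xi^k$ has no intrinsic sign. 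The principle that shadows of effective classes are nonnegative here is true, but for a different reason: from the universal sequence one gets $c(Q)=\alpha^*c(\cE)\cdot c(\cO(-1))^{-1}$ for the rank-$r$ universal quotient bundle $Q$, whence by the projection formula the shadow of any class $C$ is $\alpha_*(c(Q)\cap C)$; since $\cE$ is globally generated (all $d_i>0$), so is its quotient $Q$, and \cite[Example~12.1.7(a)]{85k:14004} then gives nonnegativity when $C$ is effective. This global-generation argument is precisely how the paper concludes, applied to the effective cycle $[\Pbb(C\oplus 1)]$ in $\Pbb(N\oplus 1)$ with its universal quotient bundle. Without identifying the correct effective class and replacing the nefness claim by global generation, your proposal does not establish the nonnegativity of the coefficients of $P$ --- as you yourself flag, this is the crux, and it is the one part of the statement your plan leaves unproven.
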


In general not all numbers $-1/\deg f_i$ appear as poles of
$\zeta_I(t)$, even for a minimal set of generators of $I$. For
example, if $f_r$ is integral over $(f_0,\dots, f_{r-1})$, then, with
notation as above, $(1+(\deg f_r)t)$ is a factor
of $P(t)$.  Equivalently, $\zeta_I(t) = \zeta_J(t)$ if $J$ is a
reduction of $I$. This observation leads to a sharper version of 
the main theorem (Proposition~\ref{pro:redux}, Corollary~\ref{cor:ref}).

The main features of the statement may be extracted from basic considerations
in Fulton-MacPherson intersection theory; this is observed in~\S\ref{sec:1min}.
We base a full proof of the main result on certain functoriality statements 
concerning the structure theorem for the Chow group of a projective bundle.
These are given in~\S\ref{sec:funsha} without reference to `Segre zeta functions', 
and seem to us independently interesting. The connection with Segre classes
is given in~\S\ref{sec:shaseg}, and the main theorem is proven 
in~\S\ref{sec:szf}, together with some refinements.
For example, we observe (Corollary~\ref{cor:poles}) that if $-1/d$ is a pole of
$\zeta_I(t)$, then $d$ is an element of the degree sequence of a minimal
homogeneous reduction of $I$.

The theorem has applications to the effective computation of Segre classes.
We illustrate this with examples, given in~\S\ref{sec:szf}. Substantial progress in this 
direction will require
a more explicit description of the `numerator' in $\zeta_I(t)$ corresponding
to a degree sequence (determining the denominator). For example, if the
generators $f_0,\dots, f_r$ form a regular sequence, then this numerator
equals $(\prod_i \deg f_i) t^r$. Our hope is that $\zeta_I(t)$ may be 
described more explicitly using tools from e.g., commutative algebra. 
We provide a rather explicit description in two particular cases, 
in~\S\S\ref{ss:ls} and~\ref{ss:mono}, where the numerators are determined
by enumerative geometry considerations and by the volumes of certain
polytopes. 

It would be helpful to have precise results on the behavior of the zeta 
function under standard operations on ideals. We observe
(see~\S\ref{sec:cha}) that $\zeta_{I'+ I''}(t) = \zeta_{I'}(t) \zeta_{I''}(t)$
if $I'$ and $I''$ satisfy a strong transversality condition. It would also
be interesting to establish precise formulas relating the Segre zeta function
with other invariants of an ideal or of the corresponding schemes. 
In~\S\ref{ss:rks} we show how to compute the projective {\em ranks\/} 
of a nonsingular projective variety from the Segre zeta function of a
defining ideal. The (well-known) fact that the projective dual variety of 
a nonsingular complete intersection is a hypersurface is an immediate
consequence (Example~\ref{exa:CI}). In~\S\ref{sec:HC} we prove that, 
under certain hypotheses, the Segre zeta function of a {\em local\/} 
complete intersection in projective space equals the Segre zeta function of a 
{\em global\/} complete intersection. The truth of Hartshorne's 
conjecture would imply that this is in fact the case for all nonsingular 
subvarieties of low codimension in projective space.

\begin{remark}
The paper~\cite{MR3383478} quotes a paper with the tentative title 
``Rationality of a Segre zeta function'' for results on Segre classes
that are close in spirit to the results proven in this paper. Those results 
may now be found in~\cite{tensored}, which includes a discussion
of the Segre zeta functions of ideals generated by sections of a fixed 
line bundle.
\qede\end{remark}

{\em Acknowledgments.} This work was supported in part by the Simons
foundation and by NSA grant H98230-16-1-0016.  The author is grateful
to Caltech for hospitality while this work was carried out.


\section{Shortcut to rationality}\label{sec:1min}

It is easy to convince oneself that $\zeta_I(t)$ should be rational, as a 
consequence of basic properties of the intersection product defined
in~\cite[Chapter~6]{85k:14004}.
Given a homogeneous ideal $I\subseteq k[x_0,\dots, x_n]$, let $\{f_0,\dots, f_r\}$ 
be any set of homogeneous generators for $I$, and let $d_i=\deg f_i$. Assume 
$n>r$. Let $X_i\subseteq\Pbb^n$ be the hypersurface defined by $f_i=0$ in $\Pbb^n$;
thus $Z=X_0\cap \cdots \cap X_r$ is the subscheme of $\Pbb^n$ defined by $I$.
Consider the fiber diagram
\[
\xymatrix{
Z \ar[r] \ar[d]_\delta & \Pbb^n \ar[d]^\Delta \\
X_0\times \cdots \times X_r \ar[r] & \Pbb^n\times\cdots \times \Pbb^n
}
\]
where $\Delta$ is the diagonal embedding. The intersection product $X_0\cdots X_r$ in
$\Pbb^n$ may be defined as
\[
\{ c(\delta^* N_{X_0\times \cdots \times X_r} \Pbb^n\times\cdots \times \Pbb^n)\cap
s(Z,\Pbb^n)\}_{n-r-1}\quad,
\]
the part of dimension $n-r-1$ in the class within brackets.
By \cite[Example 6.1.6]{85k:14004}, 
\[
\{ c(\delta^* N_{X_0\times \cdots \times X_r} \Pbb^n\times\cdots \times \Pbb^n)\cap
s(Z,\Pbb^n)\}_i = 0
\]
for $i<n-r-1$. Denoting by $H$ the hyperplane class, we have
\[
c(\delta^* N_{X_0\times \cdots \times X_r} 
\Pbb^n\times\cdots \times \Pbb^n)=\prod_i (1+d_i H)\quad;
\]
therefore, letting $\iota:Z\hookrightarrow \Pbb^n$ be the inclusion,
\begin{equation}\label{eq:1min}
\iota_* s(Z,\Pbb^n) = \frac{\big(\prod_i(1+d_i H)\cap \iota_* s(Z,\Pbb^n)\big)}
{\prod_i(1+d_i H)}
\end{equation}
expresses $\iota_* s(Z,\Pbb^n)$ as a `rational function' with poles in the 
set $\{-1/d_i\}_{i=0,\dots, r}$, and whose numerator may only 
have nonzero terms in codimension $\le (r+1)$.

This observation captures the essential features of the rationality result of 
this note, modulo technical details such as the independence on the choices 
made here and the refinements mentioned in the introduction. In the sections 
that follow we will provide a full proof of the main theorem, based on functoriality 
considerations concerning `shadows'.


\section{Functoriality of shadows}\label{sec:funsha}

Let $V$ be a variety, and let $\cE$ be a vector bundle on
$V$. According to the structure theorem for the Chow group
$A_*\Pbb(\cE)$ of the projective bundle $\cE$
(\cite[Theorem~3.3(b)]{85k:14004}), every pure-dimensional class $C\in
A_d \Pbb(\cE)$, may be written uniquely as
\[
C=\sum_{j=0}^r c_1(\cO(1))^j\cap \alpha^* (\uC_{d-r+j})
\]
where $\alpha: \Pbb(\cE)\to V$ is the structure morphism, $r+1$ is the
rank of $\cE$, and $\uC_{i-r+j}\in A_{i-r+j} V$.

\begin{defin}[\cite{MR2097164}] The {\em shadow\/} of $C$ is the class
$\uC:=\uC_{d-r}+\cdots+\uC_d$.
\qede\end{defin}

By definition, $C\in A_d\Pbb(\cE)$ may be reconstructed from its
shadow and from the information of its dimension $d$. On the other
hand, the shadow of $C$ may be computed directly from $C$:
\begin{equation}\label{eq:shad}
\uC=c(\cE)\cap \alpha_*(c(\cO(-1))^{-1}\cap C)
\end{equation}
(\cite[Lemma~4.2]{MR2097164}).

Not surprisingly, shadows satisfy simple compatibility properties with
respect to proper or flat morphisms.  Let $\varphi: W \to V$ be a
morphism. If $\cE$ is a vector bundle on $V$, we have an induced
morphism $\hat \varphi: \Pbb(\varphi^*\cE) \to \Pbb(\cE)$ and the
fiber square
\[
\xymatrix{
\Pbb(\varphi^*\cE) \ar[d]_\beta \ar[r]^{\hat\varphi} & \Pbb(\cE) 
\ar[d]^\alpha \\
W \ar[r]_\varphi & V
}
\]

\begin{lemma}\label{lem:shapf}
Assume $\varphi, \hat\varphi$ are proper, and let $C'$ be a class in
$A_*\Pbb(\varphi^*\cE)$. Then
\[
\varphi_*(\uC')=\underline{\hat\varphi_*(C')}\quad.
\]
\end{lemma}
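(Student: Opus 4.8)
The plan is to reduce the identity to a direct computation with the closed formula~\eqref{eq:shad} for the shadow, using only the projection formula and the elementary compatibilities built into the fiber square. Write $\cO(1)$ for the tautological bundle on $\Pbb(\cE)$; the corresponding bundle on $\Pbb(\varphi^*\cE)$ is then $\hat\varphi^*\cO(1)$, since the projectivization of a pullback carries the pullback of the tautological bundle. The two standard facts I would isolate at the outset are that Chern classes commute with pullback, so $c(\varphi^*\cE)=\varphi^*c(\cE)$, and that, because $\cO(-1)$ is a line bundle and pullback is a ring homomorphism on Chow groups, $c(\hat\varphi^*\cO(-1))^{-1}=\hat\varphi^*\left(c(\cO(-1))^{-1}\right)$.

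With these in hand the computation is a short chain of substitutions. Starting from
\[
\uC' = c(\varphi^*\cE)\cap\beta_*\left(c(\hat\varphi^*\cO(-1))^{-1}\cap C'\right),
\]
I first replace $c(\varphi^*\cE)$ by $\varphi^*c(\cE)$ and apply the projection formula for $\varphi$ to extract $c(\cE)$ from the pushforward, obtaining
\[
\varphi_*(\uC') = c(\cE)\cap\varphi_*\beta_*\left(c(\hat\varphi^*\cO(-1))^{-1}\cap C'\right).
\]
The commutativity $\varphi\circ\beta=\alpha\circ\hat\varphi$ of the fiber square, together with functoriality of proper pushforward, gives $\varphi_*\beta_*=\alpha_*\hat\varphi_*$, so this equals $c(\cE)\cap\alpha_*\hat\varphi_*\left(c(\hat\varphi^*\cO(-1))^{-1}\cap C'\right)$. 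Finally I rewrite $c(\hat\varphi^*\cO(-1))^{-1}$ as $\hat\varphi^*\left(c(\cO(-1))^{-1}\right)$ and invoke the projection formula for $\hat\varphi$ to move it across the pushforward, yielding
\[
\varphi_*(\uC') = c(\cE)\cap\alpha_*\left(c(\cO(-1))^{-1}\cap\hat\varphi_*(C')\right),
\]
which is precisely the expression~\eqref{eq:shad} for the shadow of $\hat\varphi_*(C')$.

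I do not expect a genuine obstacle; the content is bookkeeping within the formalism of~\cite{85k:14004}. The only points deserving a word of care are, first, that the formal inverse $c(\cO(-1))^{-1}$ is harmless here because it is a polynomial in $c_1(\cO(-1))$ (truncated by dimension), so each application of the projection formula is to an honest class; and second, the role of the hypotheses. The maps $\alpha$ and $\beta$ are proper automatically as projective-bundle projections, while properness of $\varphi$ and $\hat\varphi$ is exactly what makes $\varphi_*$, $\hat\varphi_*$, the equality $\varphi_*\beta_*=\alpha_*\hat\varphi_*$, and the projection formula available; this is where the assumption is used.
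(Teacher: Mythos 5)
Your proposal is correct and takes essentially the same route as the paper's proof: both start from the closed formula~\eqref{eq:shad}, apply the projection formula for $\varphi$, use $\varphi_*\beta_*=\alpha_*\hat\varphi_*$ from the fiber square, and then apply the projection formula for $\hat\varphi$ to recognize the shadow of $\hat\varphi_*(C')$. The only difference is cosmetic: you make explicit the identification of the tautological bundle on $\Pbb(\varphi^*\cE)$ with $\hat\varphi^*\cO(1)$, which the paper uses silently by writing $c(\cO(-1))$ for both.
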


\begin{proof}
This is a formal consequence of~\eqref{eq:shad}:
\begin{align*}
\varphi_*(\uC') &= \varphi_*\left(c(\varphi^*\cE)\cap 
\beta_*(c(\cO(-1))^{-1}\cap C')\right) \\
&= c(\cE)\cap \varphi_*\beta_*(c(\cO(-1))^{-1}\cap C') \\
&= c(\cE)\cap \alpha_*\hat\varphi_*(c(\cO(-1))^{-1}\cap C') \\
&= c(\cE)\cap \alpha_*(c(\cO(-1))^{-1}\cap \hat\varphi_*(C')) \\
&= \underline{\hat\varphi_*(C')}
\end{align*}
as stated.
\end{proof}

\begin{lemma}\label{lem:shapb}
Assume $\varphi,\hat\varphi$ are flat, and let $C$ be a class in
$\Pbb(\cE)$. Then
\[
\underline{\hat\varphi^*(C)} = \varphi^* \left( \uC \right)\quad.
\]
\end{lemma}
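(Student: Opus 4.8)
The plan is to run the computation of Lemma~\ref{lem:shapf} in its flat-pullback incarnation, replacing each proper-pushforward compatibility by the corresponding pullback statement and again working entirely from the intrinsic description~\eqref{eq:shad}. The three ingredients I would use are: that flat pullback commutes with capping against (total) Chern classes (\cite[Theorem~3.2]{85k:14004}); that the induced morphism $\hat\varphi$ is compatible with the tautological line bundles, i.e.\ $\hat\varphi^*\cO_{\Pbb(\cE)}(1)=\cO_{\Pbb(\varphi^*\cE)}(1)$, which is immediate from the construction of $\hat\varphi$ as the map of projective bundles induced by the surjection onto $\varphi^*\cE$; and the base-change identity $\varphi^*\alpha_*=\beta_*\hat\varphi^*$ attached to the displayed fiber square. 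This last identity is the single place where both hypotheses enter together: it is \cite[Proposition~1.7]{85k:14004}, valid precisely because $\alpha$ is proper while $\varphi$ (hence $\hat\varphi$) is flat.

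Concretely, starting from the right-hand side $\varphi^*(\uC)$ and expanding $\uC$ via~\eqref{eq:shad} on the base $V$, I would first move $\varphi^*$ past the Chern class, producing $c(\varphi^*\cE)\cap\varphi^*\alpha_*(c(\cO(-1))^{-1}\cap C)$; then rewrite $\varphi^*\alpha_*$ as $\beta_*\hat\varphi^*$ by base change; then move $\hat\varphi^*$ inside the cap product past $c(\cO(-1))^{-1}$, using that $\hat\varphi^*\cO(-1)=\cO(-1)$; and finally recognize the result as the right-hand side of~\eqref{eq:shad} evaluated on $\Pbb(\varphi^*\cE)$ for the class $\hat\varphi^*(C)$, namely $\underline{\hat\varphi^*(C)}$. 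The resulting chain of equalities is the formal transpose of the one displayed in the proof of Lemma~\ref{lem:shapf}.

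I do not expect a genuine obstacle here: the argument is purely formal once the three ingredients are in place, and the shape of the proof is dictated by the previous lemma. The only points requiring minor care are the base-change step, where one must check that the hypotheses align with Fulton's statement (the morphism being pushed forward is proper, the one being pulled back is flat), and the verification that $\hat\varphi$ preserves the tautological bundle, which is what licenses commuting $\hat\varphi^*$ with $c(\cO(-1))^{-1}$ in the penultimate step. Everything else is a direct translation of Lemma~\ref{lem:shapf}.
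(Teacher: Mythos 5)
Your proposal is correct and is essentially the paper's own proof: the same three ingredients (the intrinsic formula~\eqref{eq:shad}, the identification $\hat\varphi^*\cO(-1)=\cO(-1)$, and the base-change identity $\varphi^*\alpha_*=\beta_*\hat\varphi^*$ for the fiber square) are chained together, with the only difference being that you start from $\varphi^*(\uC)$ and arrive at $\underline{\hat\varphi^*(C)}$, whereas the paper writes the identical sequence of equalities in the opposite direction.
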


\begin{proof}
This is also formal, using~\eqref{eq:shad}:
\begin{align*}
\underline{\hat\varphi^*(C)} &= c(\varphi^*\cE)\cap 
\beta_*\left( c(\cO(-1))^{-1}\cap \hat\varphi^*(C) \right) \\
&= c(\varphi^*\cE)\cap \beta_*\hat\varphi^* 
\left(c(\cO(-1))^{-1}\cap C \right) \\
&= c(\varphi^*\cE)\cap \varphi^*\alpha_* \left(c(\cO(-1))^{-1}\cap C \right)
\intertext{since the diagram is a fiber square}
&= \varphi^* \left( c(\cE)\cap \alpha_* (c(\cO(-1))^{-1}\cap C)\right) \\
&= \varphi^* (\uC)
\end{align*}
as claimed.
\end{proof}

Both Lemma~\ref{lem:shapf} and~\ref{lem:shapb} are straightforward
consequences of the basic properties of push-forwards and pull-backs of
regular morphisms.  We are interested in a different type of
functoriality, involving {\em rational\/} morphisms, and which will
require a little more work.

Let $\rho:V\dashrightarrow W$ be a dominant rational morphism of
varieties, and let $S$ be a subscheme of $V$. We say that $p$ is a
{\em projection with center at $S$\/} if the blow-up $\pi: \Til V \to
V$ of $V$ at $S$ resolves the indeterminacies of $\rho$ and the lift
$\tilde \rho: \Til V \to W$ is flat.
\[
\xymatrix{
& \Til V \ar[dl]_\pi \ar[dr]^\trho \\
V \ar@{-->}[rr]^\rho & & W
}
\]
If $\rho:V\dashrightarrow W$ is a projection with center at $S$, and 
$\gamma\in A_*W$, we define the {\em join\/} of $\gamma$ and 
$S$ to be the class
\[
\gamma \vee S:=\pi_*\tilde\rho^*(\gamma) \in A_*V\quad.
\]
If $S=\emptyset$, then $\rho$ is a flat regular map, and the join
operation is the ordinary pull-back. The main result of this section
is the extension of Lemma~\ref{lem:shapb} to the case of projections.

We consider vector bundles $\cE_V$, resp., $\cE_W$ of the same rank
$r+1$ on $V$, resp.~$W$.

\begin{defin}\label{def:compa}
We say that $\cE_V$, $\cE_W$ are {\em compatible\/} if their pull-backs
to $V\smallsetminus S$ are isomorphic:
\[
\cE_V|_{V\smallsetminus S} \cong {\rho'}^*\cE_W
\]
where $\rho': V\smallsetminus S \to W$ is the restriction of
$\rho$. If $\cE_V$ and $\cE_W$ are compatible, we choose an
isomorphism $\Pbb(\cE_V|_{V\smallsetminus S})\cong
\Pbb({\rho'}^*\cE_W)$ and use it to identify the corresponding Chow
groups. Choices of classes $C_V\in A_d \Pbb(\cE_V)$, $C_W\in A_{d-\dim
  V+\dim W} \cE_W$ are then {\em compatible\/} if they agree after
pull-back to the restrictions over $V\smallsetminus S$.  
\qede\end{defin}

\begin{theorem}\label{thm:join}
Let $\rho: V\dashrightarrow W$ be a projection with center at $S$, and
let $\cE_V$, resp., $\cE_W$ be compatible vector bundles of rank
$(r+1)$ on $V$, resp., $W$.  Let $C_V\in A_d(\Pbb(\cE_V))$, $C_W\in
A_{d-\dim V+\dim W}(\Pbb(\cE_W))$ be compatible classes, with
$d>r+\dim S$. Then
\[
\underline{C_V} = \underline{C_W}\vee S\quad.
\]
\end{theorem}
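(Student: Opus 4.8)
The plan is to reduce the right-hand side to a shadow on $\Til V$ via the flat-pullback result (Lemma~\ref{lem:shapb}), to expand both sides through the shadow formula~\eqref{eq:shad}, and then to show that the two resulting classes differ only by cycles supported over the center $S$ --- a discrepancy that the dimension hypothesis $d>r+\dim S$ renders invisible. Throughout, write $E=\pi^{-1}(S)\subseteq\Til V$ for the exceptional divisor, $U=V\smallsetminus S$, and $\Til U=\pi^{-1}(U)$, so that $\pi$ restricts to an isomorphism $\Til U\xrightarrow{\sim}U$. Two observations make the comparison possible over $\Til U$: first, the bundles $\pi^*\cE_V$ and $\trho^*\cE_W$ are isomorphic there, since $\trho|_{\Til U}$ corresponds to $\rho'$ under $\pi$ and the compatibility isomorphism $\cE_V|_U\cong{\rho'}^*\cE_W$ pulls back accordingly; second, the compatibility of the classes $C_V$, $C_W$ says precisely that $\hat{\trho}^*(C_W)$ and $C_V$ have the same restriction over $\Til U\cong U$.

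First I would handle the flat half. As $\trho$ is flat, Lemma~\ref{lem:shapb} gives $\trho^*(\underline{C_W})=\underline{D}$, where $D:=\hat{\trho}^*(C_W)\in A_d\Pbb(\trho^*\cE_W)$; its dimension is $d$ because $\hat{\trho}^*$ raises dimension by $\dim V-\dim W$. By definition of the join this already yields $\underline{C_W}\vee S=\pi_*(\underline{D})$, so it remains to prove $\underline{C_V}=\pi_*(\underline{D})$. Expanding by~\eqref{eq:shad}, and writing $\beta\colon\Pbb(\trho^*\cE_W)\to\Til V$ and $\alpha\colon\Pbb(\cE_V)\to V$ for the structure maps, set
\[
\tilde a:=\beta_*\bigl(c(\cO(-1))^{-1}\cap D\bigr),\qquad a:=\alpha_*\bigl(c(\cO(-1))^{-1}\cap C_V\bigr),
\]
so that $\underline{D}=c(\trho^*\cE_W)\cap\tilde a$ and $\underline{C_V}=c(\cE_V)\cap a$.

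The technical core is to trade $\trho^*\cE_W$ for $\pi^*\cE_V$ and $\tilde a$ for $a$, incurring only errors supported on $S$. Put $\Delta:=c(\trho^*\cE_W)-\pi^*c(\cE_V)$. Since the two bundles are isomorphic over $\Til U$, $\Delta$ restricts to $0$ there, hence so does $\Delta\cap\tilde a$; by the right-exact localization sequence $A_*E\to A_*\Til V\to A_*\Til U\to 0$ the class $\Delta\cap\tilde a$ is pushed forward from $E$, so $\pi_*(\Delta\cap\tilde a)$ is supported on $S=\pi(E)$. The projection formula applied to the complementary term then gives
\[
\pi_*(\underline{D})=c(\cE_V)\cap\pi_*(\tilde a)+\pi_*(\Delta\cap\tilde a).
\]
Restricting to $U$, where $\pi$ is an isomorphism and the tautological bundles are identified, the compatibility of $C_V$ and $C_W$ shows that $\pi_*(\tilde a)$ and $a$ agree; thus $\pi_*(\tilde a)-a$ restricts to $0$ on $U$ and is, again by localization, supported on $S$.

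Assembling these identities, $\pi_*(\underline{D})-\underline{C_V}$ equals $c(\cE_V)\cap(\pi_*(\tilde a)-a)+\pi_*(\Delta\cap\tilde a)$, a sum of classes supported on $S$ and therefore concentrated in dimensions $\le\dim S$. On the other hand $\pi_*(\underline{D})$ and $\underline{C_V}$ are shadows of $d$-dimensional classes on bundles of rank $r+1$, so each lives only in dimensions $d-r,\dots,d$. The hypothesis $d>r+\dim S$ makes $d-r>\dim S$, so these two dimension ranges do not meet and the difference must vanish. This gives $\underline{C_V}=\pi_*(\underline{D})=\underline{C_W}\vee S$, as desired. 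I expect the crux to be exactly this final bookkeeping: verifying, via the localization sequence together with the projection formula for capping with Chern classes, that every discrepancy --- both the one between the two bundles and the one between the two classes --- is genuinely supported over $S$, so that the single numerical inequality $d>r+\dim S$ is enough to annihilate all error terms.
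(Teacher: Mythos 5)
Your proof is correct and follows essentially the same strategy as the paper's: pass to the blow-up $\Til V$, use the flatness of $\trho$ and Lemma~\ref{lem:shapb} to identify $\underline{C_W}\vee S$ with $\pi_*$ of a shadow over $\Til V$, compare against $\underline{C_V}$ over the complement of the center via the localization exact sequence, and annihilate the discrepancy with the hypothesis $d>r+\dim S$. The only difference is organizational: the paper pulls \emph{both} classes up to bundles over $\Til V$, compares the two shadows there (a single localization giving one error term $j_*(\gamma)$ supported on the exceptional divisor), and then pushes forward citing Lemmas~\ref{lem:shapf} and~\ref{lem:shapb}, whereas you push forward first and split the discrepancy into a bundle term and a class term, in effect re-deriving the content of Lemma~\ref{lem:shapf} inline via the projection formula.
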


\begin{proof}
The notation we need may be found in the following diagram.
\[
\xymatrix@C=10pt{
& \Pbb(\pi^* \cE_V) \ar[dr] \ar[dl]_{\hat\pi} & & \Pbb(\trho^*\cE_W) 
\ar[dl] \ar[dr]^{\hat\rho} \\
\Pbb(\cE_V) \ar[dr] & & \Til V \ar[dl]_\pi \ar[dr]^\trho & & \Pbb(\cE_W) 
\ar[dl] \\
& V \ar@{-->}[rr]^\rho & & W
}
\]
The compatibility of $\cE_V$, $\cE_W$ implies that the pull-backs of
$\cE_V$, $\cE_W$ to the complement $V^\circ$ of the exceptional
divisor $E$ in $\Til V$ are isomorphic:
\[
i^* \pi^* \cE_V \cong i^*\trho^* \cE_W
\]
where $i: V^\circ \hookrightarrow \Til V$ is the open embedding. The
compatibility of $C_V$ and $C_W$ implies that the restrictions of
$\hat\pi^*(C_V)$ and $\hat\rho^*(C_W)$ to $V^\circ$ coincide. By
Lemma~\ref{lem:shapb}, the corresponding shadows coincide after
restriction to $V^\circ$:
\[
i^* \underline{\hat\pi^*(C_V)} = \underline{i^*\hat\pi^*(C_V)} 
=\underline{i^*\hat\rho^*(C_W)}=i^* \underline{\hat\rho^*(C_W)}\quad.
\]
By the exact sequence of Chow groups for an open embedding
(\cite[\S1.8]{85k:14004}), there exists a class $\gamma$ in $A_*(E)$
such that
\begin{equation}\label{eq:upeq}
\underline{\hat\pi^*(C_V)} = \underline{\hat\rho^*(C_W)} + j_*(\gamma)
\end{equation}
in $A_*(\Til V)$, where $j: E\hookrightarrow \Til V$ is the
inclusion. Note that nonzero components of $j_*(\gamma)$ necessarily
have dimension $\ge d-r$, since this is the case for the other classes 
appearing in~\eqref{eq:upeq}.

Since $\trho$ is flat, we have $\underline{\hat\rho^*(C_W)} =
\trho^*(\underline{C_W})$, also by Lemma~\ref{lem:shapb}. On the other
hand, $\pi_*(\underline{\hat\pi^*(C_V)}) =\underline{C_V}$ by
Lemma~\ref{lem:shapf}. Therefore, \eqref{eq:upeq} implies
\begin{equation}\label{eq:eqdown}
\underline{C_V}=\pi_* \trho^*(\underline{C_W}) + \pi_* j_*(\gamma)
\end{equation}
after pushing forward by $\pi$. By definition,
$\pi_*\trho^*(\underline{C_W}) = \underline{C_W}\vee S$. Finally,
$\pi_* j_*(\gamma)$ can only have nonzero components in dimension $\ge
d-r$, but it is supported on $S$, whose dimension is $<d-r$ by
hypothesis. Therefore $\pi_* j_*(\gamma)=0$, the right-hand side
of~\eqref{eq:eqdown} equals $\underline{C_W}\vee S$, and the stated
equality follows.
\end{proof}

If $S=\emptyset$, then the hypothesis $d>r+\dim S$ is vacuous, $\rho$
is flat, $\rho^*(\cE_W)\cong \cE_V$, $\underline{C_W}\vee
S=\rho^*(\underline{C_W})$, and Theorem~\ref{thm:join} reduces to
Lemma~\ref{lem:shapb}.  If $S\neq \emptyset$, then the hypothesis
$d>r+\dim S$ is necessary.

While we will not need this for our main result, it is occasionally 
useful to relate the shadow of a class $C\in \Pbb(\cE)$ to the 
shadow of the class $C_\cL$ corresponding to $C$ in 
$\Pbb(\cE\otimes \cL)$, where $\cL$ is a line bundle on $V$.  
The reader should have no difficulties proving the following 
statement, which uses the notation introduced
in~\cite[\S2]{MR96d:14004}.

\begin{lemma}\label{lem:twsha}
Let $C\in A_d\Pbb(\cE)\cong A_d\Pbb(\cE\otimes \cL)$,
and let $\uC$, $\uC_\cL$ be respectively the shadows of $C$ viewed as
a class in $\Pbb(\cE)$ and as a class in $\Pbb(\cE\otimes \cL)$. Then
\[
\uC_\cL = c(\cL)^{\rk \cE-1+\dim V-d}\cap (\uC\otimes_V \cL)\quad.
\]
\end{lemma}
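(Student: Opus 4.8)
The plan is to reduce the statement to the explicit formula~\eqref{eq:shad} for shadows, combined with the known transformation of the tautological class under tensoring by a line bundle. First I would recall from~\cite[\S2]{MR96d:14004} how the projective bundles $\Pbb(\cE)$ and $\Pbb(\cE\otimes\cL)$ are canonically identified: they are the same variety, and the structure morphism $\alpha:\Pbb(\cE)\to V$ is unchanged. Under this identification, the tautological line bundle transforms as $\cO_{\cE\otimes\cL}(-1)\cong \cO_\cE(-1)\otimes\alpha^*\cL$, so that $c_1(\cO_{\cE\otimes\cL}(1)) = c_1(\cO_\cE(1)) + \alpha^* c_1(\cL)$. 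This is the single geometric input; everything else is a formal manipulation inside the Chow group of $\Pbb(\cE)$.

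Next I would substitute into the shadow formula. Writing~\eqref{eq:shad} for the class $C$ viewed in $\Pbb(\cE\otimes\cL)$ gives
\[
\uC_\cL = c(\cE\otimes\cL)\cap \alpha_*\bigl(c(\cO_{\cE\otimes\cL}(-1))^{-1}\cap C\bigr)\quad.
\]
The strategy is to rewrite each of the three pieces — the Chern class $c(\cE\otimes\cL)$, the inverse Chern class of the twisted tautological bundle, and the dimension bookkeeping in $\alpha_*$ — in terms of the untwisted data, and to check that the result collapses to $c(\cL)^{\rk\cE-1+\dim V-d}\cap(\uC\otimes_V\cL)$. The notation $\uC\otimes_V\cL$ is precisely the tensor operation of~\cite[\S2]{MR96d:14004}, which repackages the action of $c(\cL)$ on the graded pieces of a class, so the key is to match the $c(\cL)$-factors grading piece by grading piece.

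Concretely, I would expand $C=\sum_{j=0}^{r}c_1(\cO_\cE(1))^j\cap\alpha^*(\uC_{d-r+j})$ in the untwisted structure and track how each term behaves. The factor $c(\cO_{\cE\otimes\cL}(-1))^{-1}=(1-c_1(\cO_\cE(-1))-\alpha^*c_1(\cL))^{-1}$ and the Chern class $c(\cE\otimes\cL)$ each contribute powers of $c_1(\cL)$ governed by the rank $r+1=\rk\cE$ and by the codimension of the relevant graded piece; the dimension $d$ of $C$ enters through the push-forward $\alpha_*$, which lowers dimension by $r$ and selects the appropriate component. I expect that after collecting these contributions the total power of $c(\cL)$ acting on the component of $\uC$ in a fixed dimension is exactly the exponent $\rk\cE-1+\dim V-d$ predicted, with the remaining $c(\cL)$-dependence absorbed into the definition of $\otimes_V$.

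The main obstacle will be the bookkeeping of the $c(\cL)$-exponents: making sure the powers arising from $c(\cE\otimes\cL)$, from inverting the twisted tautological Chern class, and from the dimension shift under $\alpha_*$ combine to the single clean exponent $\rk\cE-1+\dim V-d$ rather than a grading-dependent mess. The cleanest way to control this is to fix the dimension of the component one is computing and argue homogeneity: a class of pure dimension $e$ in $A_*\Pbb(\cE)$ pulls an exponent of $c(\cL)$ equal to its codimension, and the tensor operation $\otimes_V\cL$ is defined exactly so as to restore consistency across dimensions. Once the exponent is pinned down on a single graded piece, the general case follows since the shadow and the operation $\otimes_V$ are both additive in the grading. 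This is why the statement can reasonably be left to the reader, as the author suggests.
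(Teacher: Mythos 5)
The paper does not actually prove Lemma~\ref{lem:twsha} --- it is explicitly left to the reader --- so the only question is whether your outline would close up into a correct proof. The strategy you describe (identify $\Pbb(\cE)$ with $\Pbb(\cE\otimes\cL)$ over the same structure map, track how the tautological class changes, and compare against the definition of $\otimes_V$ from \cite[\S2]{MR96d:14004}) is indeed the natural one. But as written there are two concrete problems. First, your ``single geometric input'' carries a sign error: from $\cO_{\cE\otimes\cL}(-1)\cong\cO_\cE(-1)\otimes\alpha^*\cL$ it follows that $c_1(\cO_{\cE\otimes\cL}(1))=c_1(\cO_\cE(1))-\alpha^*c_1(\cL)$, not $+\,\alpha^*c_1(\cL)$; likewise the correct expansion is $c(\cO_{\cE\otimes\cL}(-1))^{-1}=\bigl(1+c_1(\cO_\cE(-1))+\alpha^*c_1(\cL)\bigr)^{-1}$, whereas you wrote $\bigl(1-c_1(\cO_\cE(-1))-\alpha^*c_1(\cL)\bigr)^{-1}$; your two displayed formulas are inconsistent with each other as well as with the stated isomorphism. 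In a lemma whose entire content is sign-and-exponent bookkeeping this is not cosmetic: carried through, your sign yields $\uC_\cL = c(\cL^\vee)^{\rk\cE-1+\dim V-d}\cap(\uC\otimes_V\cL^\vee)$, i.e., the statement for the dual line bundle, not the statement of the lemma. Second, the decisive step --- that the powers of $c(\cL)$ coming from the various sources collapse to the single exponent $\rk\cE-1+\dim V-d$ --- is precisely the step you defer (``I expect that after collecting these contributions\dots''), so the proof is never actually performed where its content lies.

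For the record, the bookkeeping closes quickly once the sign is right, and it is cleanest to bypass \eqref{eq:shad} and $c(\cE\otimes\cL)$ entirely, which removes two of the three sources of $c(\cL)$-powers you identify as the main obstacle. Put $r+1=\rk\cE$, $n=\dim V$, $\xi=c_1(\cO_\cE(1))$, $\lambda=c_1(\cL)$, so that $\xi':=c_1(\cO_{\cE\otimes\cL}(1))=\xi-\alpha^*\lambda$. Write $C=\sum_{j=0}^r\xi^j\cap\alpha^*(\uC_{d-r+j})$ as in the structure theorem, substitute $\xi=\xi'+\alpha^*\lambda$, expand binomially, and collect the coefficient of each $(\xi')^k$:
\[
C=\sum_{k=0}^r(\xi')^k\cap\alpha^*\Bigl(\sum_{j\ge k}\binom{j}{k}\lambda^{j-k}\cap\uC_{d-r+j}\Bigr)
\quad\Longrightarrow\quad
\uC_\cL=\sum_{j=0}^r(1+\lambda)^j\cap\uC_{d-r+j}\quad,
\]
the implication by summing the coefficient classes over $k$ and swapping the order of summation. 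On the other hand, $\uC_{d-r+j}$ has codimension $n-d+r-j$ in $V$, so by the definition of $\otimes_V$,
\[
c(\cL)^{r+n-d}\cap(\uC\otimes_V\cL)
=\sum_{j=0}^r(1+\lambda)^{(r+n-d)-(n-d+r-j)}\cap\uC_{d-r+j}
=\sum_{j=0}^r(1+\lambda)^j\cap\uC_{d-r+j}\quad,
\]
which is exactly $\uC_\cL$, with exponent $r+n-d=\rk\cE-1+\dim V-d$ as claimed. Your outline has the right skeleton, but without fixing the sign and carrying out this computation it does not constitute a proof.
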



\section{Shadows and Segre classes}\label{sec:shaseg}

As in~\S\ref{sec:funsha}, let $V$ be a variety. Let $Z\subsetneq V$ be
a proper subscheme, realized as the zero-scheme of a section $\sigma$
of a vector bundle $\cE$ on $V$. The blow-up of $V$ along $Z$ may be
embedded in the projectivization of $\cE$: $\sigma$ induces a {\em
  rational\/} section $\sigma: V \dashrightarrow \Pbb(\cE)$, and
$B\ell_ZV$ may be identified with the (closure of the) image of this
section.
We may then consider the shadow $\uBl$ of the class of $B\ell_ZV$ in
$\Pbb\cE$; this is a class in $A_*V$. There is a very straightforward
relation between $\uBl$ and the push-forward to $V$ of the segre class
$s(Z,V)$.

\begin{lemma}\label{lem:sfroms}
Let $\iota:Z \hookrightarrow V$ be the embedding. Then
\[
\iota_* s(Z,V)=[V] - s(\cE)\cap \uBl\quad.
\]
\end{lemma}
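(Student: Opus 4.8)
The plan is to apply the shadow formula~\eqref{eq:shad} directly to the class $C = [B\ell_Z V] \in A_{\dim V}\Pbb(\cE)$ and to recognize the resulting push-forward as the classical blow-up expression for $s(Z,V)$. Write $\pi: B\ell_Z V \to V$ for the restriction of the structure morphism $\alpha$ (so that $\pi$ is the blow-up map), and let $\xi = c_1(\cO(1))$ denote the restriction of the hyperplane class to $B\ell_Z V$. Since $c(\cO(-1))^{-1} = \sum_{i\ge 0}\xi^i$, formula~\eqref{eq:shad} gives
\[
\uBl = c(\cE)\cap \pi_*\Big(\sum_{i\ge 0}\xi^i\cap [B\ell_Z V]\Big),
\]
and everything reduces to understanding $\xi$ on $B\ell_Z V$ and evaluating this push-forward.

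The key geometric input, and the step I expect to require the most care, is the identification of the restricted hyperplane class with (minus) the exceptional divisor. The embedding $B\ell_Z V\hookrightarrow \Pbb(\cE)$ is the closure of the section $V\dashrightarrow \Pbb(\cE)$ determined by $\sigma$; pulling $\sigma:\cO_V\to\cE$ back to the blow-up, it factors through the tautological subbundle as $\cO_{B\ell_Z V}\to\cO(-1)\hookrightarrow\pi^*\cE$, and the first map is a section of $\cO(-1)$ vanishing precisely along the exceptional divisor $E$, because $\pi^{-1}\cI_Z\cdot\cO_{B\ell_Z V} = \cO(-E)$. Hence $\cO(-1)|_{B\ell_Z V}\cong\cO(E)$, that is, $\xi = -[E]$. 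I would check this carefully against the sign conventions for $\cO(1)$ underlying~\eqref{eq:shad}, since the whole computation hinges on it. Granting it, $\sum_{i\ge 0}\xi^i\cap[B\ell_Z V] = (1+E)^{-1}\cap[B\ell_Z V]$.

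It then remains to compute $\pi_*\big((1+E)^{-1}\cap[B\ell_Z V]\big)$, which is exactly the definition of the Segre class unwound through the blow-up. Writing $j:E\hookrightarrow B\ell_Z V$ and $\eta:E\to Z$ for the projection (so that $\pi\circ j = \iota\circ\eta$), the self-intersection identity $E^{m}\cap[B\ell_Z V] = j_*\big((E|_E)^{m-1}\cap[E]\big)$ for $m\ge 1$, together with $\pi_*[B\ell_Z V] = [V]$ and $c_1(\cO_E(1)) = -E|_E$, yields
\[
\pi_*\big((1+E)^{-1}\cap[B\ell_Z V]\big) = [V] - \iota_*\eta_*\Big(\sum_{k\ge 0}c_1(\cO_E(1))^k\cap[E]\Big) = [V] - \iota_* s(Z,V),
\]
the last equality being the definition of $s(Z,V)$. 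Substituting into the formula for $\uBl$ gives $\uBl = c(\cE)\cap\big([V] - \iota_* s(Z,V)\big)$, and capping with $s(\cE) = c(\cE)^{-1}$ and rearranging produces $\iota_* s(Z,V) = [V] - s(\cE)\cap\uBl$, as claimed. As a consistency check I would verify the special case where $Z$ is a smooth divisor, so that $\cE$ is a line bundle and $B\ell_Z V = V$; there all the signs can be matched directly against $s(Z,V) = c(N_ZV)^{-1}\cap[Z]$.
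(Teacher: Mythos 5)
Your proof is correct and follows essentially the same route as the paper: both apply the shadow formula~\eqref{eq:shad} to $[B\ell_ZV]$, use the identification $c(\cO(-1))|_{B\ell_ZV}=1+E$ (equivalently $\xi=-E$), and conclude via the blow-up expression $\pi_*\bigl([E]/(1+E)\bigr)=\iota_*s(Z,V)$. The only difference is one of detail, not of method: you spell out the factorization $\cO_{B\ell_ZV}\to\cO(-1)\hookrightarrow\pi^*\cE$ proving $\cO(-1)|_{B\ell_ZV}\cong\cO(E)$, and you unwind the push-forward computation that the paper delegates to the citation of \cite[Corollary~4.2.2]{85k:14004}.
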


\begin{proof}
Let $\alpha:\Pbb(\cE) \to V$ be the projection. Using \eqref{eq:shad},
\begin{align*}
[V]-s(\cE)\cap \uBl &= [V]- s(\cE) c(\cE) \alpha_*(c(\cO(-1))^{-1}\cap
[B\ell_ZV]) \\
&=[V]- \alpha_*(c(\cO(-1))^{-1}\cap [B\ell_ZV]) \\
&=\alpha_*\left([B\ell_ZV] - c(\cO(-1))^{-1}\cap [B\ell_ZV]\right)\quad.
\end{align*}
The class $c(\cO(-1))$ restricts to $1+E$ on $B\ell_ZV$, where $E$ is
the exceptional divisor. Therefore
\[
[V]-s(\cE)\cap \uBl 
=\alpha_*\left(\left(1-\frac 1{1+E}\right)\cap [B\ell_ZV]\right)
=\alpha_*\left(\frac {[E]}{1+E}\right)\quad.
\]
The statement follows (by \cite[Corollary~4.2.2]{85k:14004}).
\end{proof}

\begin{remark}
(i) We could restrict $\cE$ to $Z$, and consider the shadow $\uE$ of
  the exceptional divisor $E$ as a class in $A_*Z$. Then
  $s(Z,V)=s(\cE)\cap \uE$, as the reader may verify.

(ii) If $V$ is nonsingular, and $Z$ is the zero-scheme of a section of
  the tangent bundle $TV$, then $\uE$ equals the Chern-Fulton class
  $\cf(Z)$ of $Z$, defined in~\cite[Example~4.2.6(a)]{85k:14004}
  (in particular, it is independent of $V$).

(iii) In $V$, $\uBl+\uE$ is a decomposition of the total Chern class
  $c(\cE)\cap [V]$ as a sum of two classes. This decomposition
  generalizes the decomposition of $c(\cL)$ as $1+D$, where $D$ is the
  divisor determined by a section of a line bundle $\cL$.
\qede\end{remark}

\begin{example}
Let $V=\Pbb^n$, and assume that the ideal of $Z$ is generated by $r+1$
forms of degree~$d$: that is, $\cE=\cO(d)^{r+1}$. Applying
Lemma~\ref{lem:sfroms} gives
\[
\iota_* s(Z,\Pbb^n) = [\Pbb^n]-(1+dH)^{-r-1}\cap \uBl_{\cO(d)}
\]
where $\uBl_{\cO(d)}$ is the shadow of $B\ell_Z\Pbb^n$ as a class in
$\Pbb(\cE)$. On the other hand, tensoring $\cE$ by $\cO(-d)$ realizes
$\Pbb(\cE)$ as a trivial bundle $\Pbb^n\times \Pbb^r$. According to
Lemma~\ref{lem:twsha}, the corresponding shadow changes as follows:
\[
\uBl_{\cO(d)}= (1+dH)^r\cap (\uBl\otimes \cO(dH))\quad,
\]
where $\uBl$ is the shadow with respect to the trivial bundle.
Therefore,
\begin{align*}
\iota_* s(Z,\Pbb^n) &= [\Pbb^n]-(1+dH)^{-r-1}\cap \left(
(1+dH)^r\cap (\uBl\otimes \cO(dH))\right) \\
&= [\Pbb^n]-(1+dH)^{-1}\cap \left( \uBl\otimes \cO(dH)\right)\quad.
\end{align*}
This reproduces Proposition~3.1 in~\cite{MR1956868}.
\qede\end{example}

In view of Lemma~\ref{lem:sfroms}, the functoriality properties proved
in~\S\ref{sec:funsha} imply analogous properties for Segre classes, at
least after push-forward to the ambient variety. Lemmas~\ref{lem:shapf} 
and~\ref{lem:shapb} simply specialize to the good behavior of Segre
classes with respect to proper and to flat morphisms,
\cite[Proposition~4.2]{85k:14004}.  Theorem~\ref{thm:join} determines
the behavior of Segre classes under `projections', as follows.

As in~\S\ref{sec:funsha}, we consider a projection $\rho:
V\dashrightarrow W$ with center in $S\subseteq V$. We consider proper
subschemes $Z\subsetneq W$, $\hZ\subsetneq V$ such that
${\rho'}^{-1}(Z)$ agrees with $\hZ\cap (V\smallsetminus S)$, where
$\rho': V\smallsetminus S\to W$ is the (regular) restriction of
$\rho$. More precisely, we assume that we have compatible vector
bundles $\cE_V$ on $V$, $\cE_W$ on $W$, and sections $s_V$ of $\cE_V$,
$s_W$ of $\cE_W$, such that the outer diagram in
\[
\xymatrix@C=10pt{
{\cE_V}|_{V\smallsetminus S} \ar[dr] \ar@{=}[rr]^\sim & & {\rho'}^* \cE_W \ar[dl] \\
& V\smallsetminus S \ar@/^1pc/[ul]^{s_V|_{V\smallsetminus S}} 
\ar@/_1pc/[ur]_{{\rho'}^*s_W}
}
\]
commutes; and we let $Z$, resp., $\hZ$ be the zero-schemes of $s_W$,
resp., $s_V$.  Loosely speaking, $\hZ$ is the inverse image of $Z$
under $\rho$ (but of course an inverse image is not defined as $\rho$
is only assumed to be rational).

\begin{theorem}\label{thm:segcla}
Let $Z\overset i\hookrightarrow W$, resp.~$\hZ\overset
\hi\hookrightarrow V$ be zero-schemes of matching sections of
compatible vector bundles of rank $r+1$, as above. Assume $r<(\dim
V-\dim S)$. Then
\begin{equation}\label{eq:mainid}
\hi_* s(\hZ,V) = [V]-s(\cE_V)\cap \left( (c(\cE_W)
\cap ([W]-i_* s(Z,W)))\vee S\right)\quad.
\end{equation}
\end{theorem}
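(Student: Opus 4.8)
The plan is to recognize that Theorem~\ref{thm:segcla} is simply the translation of Theorem~\ref{thm:join} through the dictionary provided by Lemma~\ref{lem:sfroms}, so the entire argument should be a matter of substituting the Segre-class formula into the shadow identity and unwinding the definitions. First I would set up the shadows: let $\uBl_W$ be the shadow of $[B\ell_Z W]$ in $\Pbb(\cE_W)$ and let $\uBl_V$ be the shadow of $[B\ell_{\hZ} V]$ in $\Pbb(\cE_V)$. The sections $s_W$, $s_V$ realize these blow-ups as (closures of) images of rational sections, exactly as in the setup preceding Lemma~\ref{lem:sfroms}, so that lemma applies on both sides and gives
\[
i_* s(Z,W) = [W] - s(\cE_W)\cap \uBl_W\quad,\qquad
\hi_* s(\hZ,V) = [V] - s(\cE_V)\cap \uBl_V\quad.
\]
Solving the first for the shadow yields $\uBl_W = c(\cE_W)\cap([W]-i_*s(Z,W))$, which is precisely the class being joined with $S$ on the right-hand side of~\eqref{eq:mainid}. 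So the content of the theorem reduces to the single clean statement $\uBl_V = \uBl_W \vee S$, after which I substitute back into the expression for $\hi_* s(\hZ,V)$ and read off the claimed identity.

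Next I would verify that Theorem~\ref{thm:join} genuinely applies to the classes $C_V=[B\ell_{\hZ}V]$ and $C_W=[B\ell_Z W]$. The compatibility of $\cE_V$ and $\cE_W$ is assumed, and the commuting diagram relating $s_V$, $s_W$ over $V\smallsetminus S$ forces the rational sections — hence the blow-up classes — to agree after restriction to the locus over $V\smallsetminus S$; this is exactly the compatibility of $C_V$ and $C_W$ required by Definition~\ref{def:compa}. The dimension hypothesis also matches: $B\ell_{\hZ}V$ has dimension $d=\dim V$, the bundles have rank $r+1$, and the condition $d>r+\dim S$ of Theorem~\ref{thm:join} becomes $\dim V>r+\dim S$, i.e. $r<\dim V-\dim S$, which is the assumed inequality. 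With these checked, Theorem~\ref{thm:join} directly delivers $\underline{C_V}=\underline{C_W}\vee S$, that is $\uBl_V=\uBl_W\vee S$.

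The one point requiring genuine care — and what I expect to be the main obstacle — is the dimension bookkeeping in the compatibility of the two blow-up classes under Theorem~\ref{thm:join}. The theorem demands $C_V\in A_d$ and $C_W\in A_{d-\dim V+\dim W}$; with $d=\dim V$ this forces $C_W$ to live in $A_{\dim W}$, which is consistent since $B\ell_Z W$ is $\dim W$-dimensional, but I should confirm that the chosen isomorphism $\Pbb(\cE_V|_{V\smallsetminus S})\cong\Pbb({\rho'}^*\cE_W)$ identifies the restriction of $[B\ell_{\hZ}V]$ with the pullback of $[B\ell_Z W]$ and not merely with some class agreeing in the top graded piece. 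Because $\hZ$ restricts to ${\rho'}^{-1}(Z)$ away from $S$ and blow-up commutes with flat pullback, the two rational sections agree over $V\smallsetminus S$, so their graphs' closures restrict identically; this is what makes the identification legitimate. Once that compatibility is secured, the remainder is the purely formal substitution described above, and the theorem follows.
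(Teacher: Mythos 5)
Your proposal is correct and takes essentially the same route as the paper's own proof: both identify $[B\ell_{\hZ}V]$ and $[B\ell_Z W]$ as compatible classes in the sense of Definition~\ref{def:compa} (via the agreement of the rational sections over $V\smallsetminus S$), check that $d=\dim V$ satisfies $d>r+\dim S$, apply Theorem~\ref{thm:join} to get $\underline{[B\ell_{\hZ}V]}=\underline{[B\ell_Z W]}\vee S$, and then translate into Segre classes by Lemma~\ref{lem:sfroms}. The only cosmetic difference is that you solve Lemma~\ref{lem:sfroms} explicitly for the shadow $c(\cE_W)\cap([W]-i_*s(Z,W))$ before substituting, whereas the paper states the final formula as a direct consequence.
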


\begin{proof}
After projectivizing the bundles, we have the commutative diagram
\[
\xymatrix@C=10pt{
\Pbb({\cE_V}|_{V\smallsetminus S}) \ar@{=}[rr]^\sim & & 
\Pbb({\rho'}^* \cE_W) \\
& V\smallsetminus S \ar@/^1pc/@{-->}[ul]^{s_V|_{V\smallsetminus S}} 
 \ar@/_1pc/@{-->}[ur]_{{\rho'}^*s_W}
}
\]
where $s_V|_{V\smallsetminus S}$, ${\rho'}^*s_W$ are the rational
sections induced by their regular namesakes. It follows that
\[
\overline{s_V(V\smallsetminus S)} 
= \overline{{\rho'}^*s_W(V\smallsetminus S)}\quad,
\]
and this implies that $\left[\overline{s_V(V)}\right]$ and 
$\left[\overline{s_W(W)}\right]$
are compatible classes in the sense of
Definition~\ref{def:compa}. These are the classes of the blow-ups
$[B\ell_\hZ V]$ and $[B\ell_ZW]$, respectively. Also, the dimension
$d$ of these classes is $\dim V$, and $\dim V>r+\dim S$ by
hypothesis. By Theorem~\ref{thm:join}, we have an equality of shadows
\[
\underline{[B\ell_\hZ V]} = \underline{[B\ell_Z W]} \vee S\quad.
\] 
The stated formula is then a direct consequence of Lemma~\ref{lem:sfroms}.
\end{proof}

\begin{corol}\label{cor:stronid}
With the same notation, assume $r+1<(\dim V-\dim S)$. Then
\begin{equation}\label{eq:strin}
\hi_* s(\hZ,V) = s(\cE_V)\cap \left( (c(\cE_W)\cap 
i_* s(Z,W))\vee S\right)\quad.
\end{equation}
\end{corol}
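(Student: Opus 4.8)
The plan is to deduce the Corollary directly from Theorem~\ref{thm:segcla}, by expanding the right-hand side of~\eqref{eq:mainid}. Since the cap product is linear and the join $\gamma\mapsto\gamma\vee S=\pi_*\trho^*(\gamma)$ is additive (being a composite of flat pull-back and proper push-forward), \eqref{eq:mainid} splits as
\[
\hi_* s(\hZ,V) = [V]-s(\cE_V)\cap\big((c(\cE_W)\cap[W])\vee S\big)
+s(\cE_V)\cap\big((c(\cE_W)\cap i_* s(Z,W))\vee S\big)\quad.
\]
The last summand is exactly the right-hand side of~\eqref{eq:strin}, so the whole statement reduces to showing that under the strengthened hypothesis $r+1<\dim V-\dim S$ the first two terms cancel. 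I would isolate this cancellation as the single identity
\[
(c(\cE_W)\cap[W])\vee S = c(\cE_V)\cap[V]\quad,
\]
for then $s(\cE_V)\cap\big((c(\cE_W)\cap[W])\vee S\big)=s(\cE_V)c(\cE_V)\cap[V]=[V]$, and the first two terms disappear, leaving precisely~\eqref{eq:strin}.

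To prove this identity I would pass to the blow-up $\pi\colon\Til V\to V$ resolving $\rho$, with lift $\trho\colon\Til V\to W$ flat and exceptional divisor $j\colon E\hookrightarrow\Til V$, noting $\pi(E)\subseteq S$. Using flatness of $\trho$ (so $\trho^*[W]=[\Til V]$ and $\trho^*c(\cE_W)=c(\trho^*\cE_W)$) together with the projection formula for the proper birational map $\pi$ (so $\pi_*[\Til V]=[V]$), both sides of the identity become push-forwards of total Chern classes capped against $[\Til V]$:
\[
(c(\cE_W)\cap[W])\vee S=\pi_*\big(c(\trho^*\cE_W)\cap[\Til V]\big)\quad,\qquad
c(\cE_V)\cap[V]=\pi_*\big(c(\pi^*\cE_V)\cap[\Til V]\big)\quad.
\]
Their difference is $\pi_*(\eta)$ with $\eta=\big(c(\pi^*\cE_V)-c(\trho^*\cE_W)\big)\cap[\Til V]$, and the identity amounts to $\pi_*(\eta)=0$.

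The crux, and the step I expect to be the main obstacle, is this vanishing; it is exactly where the strengthened hypothesis enters. The compatibility of $\cE_V$ and $\cE_W$ gives $\pi^*\cE_V\cong\trho^*\cE_W$ over the complement $V^\circ=\Til V\smallsetminus E$, so the difference of total Chern classes vanishes there; hence $\eta$ is supported on $E$, and using the excision sequence of~\cite[\S1.8]{85k:14004} I can write $\eta=j_*(\eta')$ with $\eta'\in A_*(E)$. Because $\cE_V$ and $\cE_W$ both have rank $r+1$, the difference $c(\pi^*\cE_V)-c(\trho^*\cE_W)$ is a sum of terms of codimension $1$ through $r+1$ only; consequently every nonzero component of $\eta$, hence of $\eta'$, has dimension at least $\dim V-(r+1)$. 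Since $\pi\circ j$ factors through $S$, the class $\pi_*(\eta)=(\pi\circ j)_*(\eta')$ is supported on $S$, so each of its components of dimension $>\dim S$ must vanish. The hypothesis $r+1<\dim V-\dim S$ is precisely $\dim V-(r+1)>\dim S$, which forces every surviving component of $\pi_*(\eta)$ into dimension $>\dim S$; therefore $\pi_*(\eta)=0$. The two delicate points to check are the codimension bound $r+1$ on the Chern-class difference (this is what keeps the lowest-dimensional piece of $\eta$ above $\dim S$, and is where the improvement over the hypothesis $r<\dim V-\dim S$ of Theorem~\ref{thm:segcla} is genuinely used) and that $\eta$ is honestly supported on $E$ rather than merely agreeing with $0$ generically.
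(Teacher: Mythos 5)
Your proposal is correct, and its overall skeleton agrees with the paper's: both expand~\eqref{eq:mainid} using additivity of the join and linearity of the cap product, and both reduce~\eqref{eq:strin} to the single cancellation identity $(c(\cE_W)\cap[W])\vee S=c(\cE_V)\cap[V]$, with the hypothesis $r+1<\dim V-\dim S$ entering only through the dimension count. Where you diverge is in the proof of this identity. The paper argues directly on $V$: compatibility of the bundles makes the two classes agree after restriction to $V\smallsetminus S$; the excision sequence $A_iS\to A_iV\to A_i(V\smallsetminus S)\to 0$ of \cite[Proposition~1.8]{85k:14004} gives $A_iV\cong A_i(V\smallsetminus S)$ for $i>\dim S$ (since $A_iS=0$ there); and both classes are concentrated in dimensions $\ge \dim V-(r+1)>\dim S$ because the bundles have rank $r+1$, so they coincide in $A_*V$. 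You instead lift to the blow-up $\Til V$, rewrite both classes as $\pi_*$ of total Chern classes capped with $[\Til V]$ (flatness of $\trho$, projection formula for $\pi$), apply excision with respect to the exceptional divisor $E$, and kill the pushed-forward difference $\pi_*(\eta)$ because it is supported on $S$ in dimensions $>\dim S$. This is precisely the mechanism of the paper's proof of Theorem~\ref{thm:join}, re-run for the fundamental classes; it is valid --- your two flagged points are indeed fine, since the degree-zero terms of the two total Chern classes cancel, and the excision sequence is graded by dimension, so $\eta'$ can be chosen component by component --- but it is longer than necessary, as Theorem~\ref{thm:segcla} has already absorbed all the blow-up bookkeeping. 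What the paper's route buys is economy: one application of excision downstairs finishes the job. What yours buys is self-containment: you only ever use the definition of the join as $\pi_*\trho^*$, never needing to identify its restriction to $V\smallsetminus S$ with the flat pull-back ${\rho'}^*$, a fact the paper's restriction step implicitly requires.
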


\begin{proof}
For all $i$, we have the exact sequence
\[
\xymatrix{
A_iS \ar[r] & A_i V \ar[r] & A_i(V\smallsetminus S) \ar[r] & 0
} 
\]
(\cite[Proposition~1.8]{85k:14004}). It follows that $A_iV \cong
A_i(V\smallsetminus S)$ for $i>\dim S$.
The fact that the bundles are compatible implies that
$c(\cE_V)\cap [V]$ and $(c(\cE_W)\cap [W])\vee S$ agree after
restriction to $V\smallsetminus S$.
If $r+1<(\dim V-\dim S)$, then the codimension of $S$ exceeds 
the rank of these bundles, hence we can deduce that
$c(\cE_V)\cap [V]=(c(\cE_W)\cap [W])\vee S$ in $A_*V$.

Therefore $s(\cE)\cap ((c(\cE_W)\cap [W])\vee S) = [V]$
if $r+1<(\dim V-\dim S)$, and
the stated equality follows from Theorem~\ref{thm:segcla}.
\end{proof}

\begin{remark}\label{rem:stronid}
A refinement of the argument proving Theorem~\ref{thm:segcla}
shows that~\eqref{eq:strin} holds as an identity of classes {\em on $\hZ$\/}
if $r+1<(\dim V-\dim S)$. 
In our application in~\S\ref{sec:szf} we will only need the equality after 
push-forward, as given in Corollary~\ref{cor:stronid}. 

Identity~\eqref{eq:mainid} cannot be stated in $A_*\hZ$, so this strengthening
is not available for Theorem~\ref{thm:segcla}. However, the advantage of the
weaker requirement on $\dim S$ in the hypotheses of Theorem~\ref{thm:segcla}
may be important from the computational viewpoint.
\qede\end{remark}


\section{The Segre zeta function of a homogeneous ideal}\label{sec:szf}

In this section we define the `Segre zeta function' of a homogeneous
ideal in a polynomial ring, and prove its rationality and other
features.  We will apply the results obtained in the previous sections
to the projection $\rho:\Pbb^N \dashrightarrow \Pbb^n$ with center at
a subspace $\Pbb^m$, $m=N-n-1$; this is a projection in the sense used
in the previous sections. The join operation $\gamma\mapsto \gamma
\vee \Pbb^m$ acts as a `partial pull-back' on the Chow ring:
\[
A^i \Pbb^n \to A^i \Pbb^N\quad, \quad \gamma\mapsto \gamma \vee \Pbb^m
\]
is defined by sending the generator $H^i$ of $A^i \Pbb^n$ to the
generator $H^i\in A^i \Pbb^N$; here $H$ denotes the hyperplane class
(in both $\Pbb^n$ and $\Pbb^N$).

Notice that, geometrically, this is indeed a `join' operation: $H^i$
is represented by a linearly embedded $\Pbb^{n-i}$ in $\Pbb^n$; the
corresponding class in $A_{N-i}\Pbb^N$ is the class of the join of
$\Pbb^{n-i}$ and the center $\Pbb^m$ of the projection.

Also note that this is {\em not\/} a ring homomorphism: $H^{n+1}=0$ in
$A^*\Pbb^n$, while $H^{n+1}\neq 0$ in $A^*\Pbb^N$ for $N>n$.  

It will
be convenient to adopt a `cohomological' notation, and represent
classes in projective space $\Pbb^r$ as polynomials of degrees $\le r$
in the hyperplane class, which will uniformly be denoted $H$. With
this convention, the join operation $A_*\Pbb^n \to A_*\Pbb^N$ acts in
the simplest possible way:
\[
\gamma \mapsto \gamma \vee \Pbb^m\quad:\quad P(H) \mapsto P(H)\quad.
\]
However, care has to be taken to ensure that the polynomial at the source has
degree $\le n$. We will denote by $[P(H) ]_{\le n}$ the truncation of
the polynomial $P(H)$ to $H^n$. This operation may be extended to
power series in $H$.

\begin{example}
With this notation,
\[
\left[\frac {dH}{1+dH}\right]_4 = dH-d^2H^2+d^3H^3-d^4H^4
\]
represents $s(X,\Pbb^4)\in A_*\Pbb^4$ for a degree-$d$ hypersurface
$X$ in $\Pbb^4$. It also represents the join $s(X,\Pbb^4)\vee
\Pbb^m\in A_* \Pbb^{m+5}$, for every $m\ge 0$.
\qede\end{example}

Now let $I\subseteq k[x_0,\dots, x_n]$ be a homogeneous
ideal. For any $N>n$, we let $I_N\subseteq k[x_0,\dots, x_N]$ be
the extension of $I$, and we denote by 
$Z_N\overset{\iota_N}\hookrightarrow\Pbb^N$ the subscheme defined
by~$I_N$.  As above, we will denote by $H$ the hyperplane class (in
any projective space).

\begin{lemma}\label{lem:wd}
There exists a well-defined power series
\[
\zeta_I(t) = \sum_{i\ge 0} a_i t^i \in \Zbb[[t]]
\]
such that for all $N\ge n$, 
\[
\iota_{N*}s(Z_N,\Pbb^N) = \sum_{i=0}^N a_i H^i \cap [\Pbb^N]\quad.
\]
\end{lemma}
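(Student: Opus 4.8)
The plan is to reduce the statement to a stabilization property of the shadows of the blow-ups $B\ell_{Z_N}\Pbb^N$, and then to read off the coefficients $a_i$ from a single stable shadow. First I would fix homogeneous generators $f_0,\dots,f_r$ of $I$ with $d_i=\deg f_i$, and for every $N\ge n$ realize $Z_N$ as the zero-scheme of the section $(f_0,\dots,f_r)$ of the bundle $\cE_N=\bigoplus_i\cO(d_i)$ on $\Pbb^N$. Lemma~\ref{lem:sfroms} then gives $\iota_{N*}s(Z_N,\Pbb^N)=[\Pbb^N]-s(\cE_N)\cap\uBl_N$, where $\uBl_N$ is the shadow of $B\ell_{Z_N}\Pbb^N$ in $\Pbb(\cE_N)$. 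Since $s(\cE_N)=\prod_i(1+d_iH)^{-1}$ is the same power series in $H$ for every $N$, the whole problem becomes showing that the classes $\uBl_N\in A_*\Pbb^N$ are, in the cohomological notation introduced above, truncations of one fixed power series in $H$.

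The key finiteness input is that $\uBl_N$ has bounded degree: because $[B\ell_{Z_N}\Pbb^N]$ has dimension $N=\dim\Pbb^N$ and $\cE_N$ has rank $r+1$, the defining decomposition of the shadow has components only in codimensions $0,\dots,r$, so $\uBl_N$ is a polynomial in $H$ of degree $\le r$, with a bound independent of $N$. To compare these polynomials for different $N$ I would apply Theorem~\ref{thm:join} to the projection $\rho:\Pbb^N\dashrightarrow\Pbb^{N'}$ with center $\Pbb^{N-N'-1}$ (for $N>N'\ge n$): the bundles $\cE_N$, $\cE_{N'}$ are compatible, since $\rho$ pulls back $\cO(1)$ to $\cO(1)$ away from the center, and the blow-up classes are compatible because, over the complement of the center, both are closures of the graph of the same rational section. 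When $N'\ge r$ the hypothesis $d>r+\dim S$ of Theorem~\ref{thm:join} holds, and since the join acts as the identity on polynomials in $H$, we obtain $\uBl_N=\uBl_{N'}$ as polynomials. Thus the coefficients of $\uBl_N$ stabilize for $N\ge r$; writing $\uBl^\infty$ for the resulting degree-$\le r$ polynomial, I would define $a_i$ as the coefficient of $H^i$ in $\Sigma(H):=1-\prod_i(1+d_iH)^{-1}\,\uBl^\infty$.

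The main obstacle is the range $n\le N'<r$, which occurs precisely when $I$ cannot be generated by $n+1$ forms; there the hypothesis of Theorem~\ref{thm:join} fails and the projection $\Pbb^N\dashrightarrow\Pbb^{N'}$ does not directly compute $\uBl_{N'}$. I expect to resolve this by exploiting the \emph{proof} of Theorem~\ref{thm:join} rather than its statement: the discrepancy there is $\pi_*j_*(\gamma)$, a class supported on the center $S=\Pbb^{N-N'-1}$. As a class on $\Pbb^N$ supported on a linear subspace of dimension $N-N'-1$, it is a combination of the $H^i$ with $i\ge N'+1$ only, so it cannot affect the coefficients of $H^0,\dots,H^{N'}$. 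Hence even in this range the codimension-$\le N'$ part of $\uBl_N$ agrees with that of $\uBl^\infty$, and since $\uBl_{N'}$ itself has degree $\le N'$ we conclude $\uBl_{N'}=[\uBl^\infty]_{\le N'}$. Feeding this back through Lemma~\ref{lem:sfroms} yields $\iota_{N'*}s(Z_{N'},\Pbb^{N'})=[\Sigma(H)]_{\le N'}$ for every $N'\ge n$, which is exactly the asserted well-definedness of $\zeta_I(t)=\sum_i a_it^i$. The only delicate point in the write-up is this support argument for the error term; everything else is the formal bookkeeping of truncations, using that multiplication by the power series $\prod_i(1+d_iH)^{-1}$ commutes with truncation from above.
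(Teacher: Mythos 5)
Your argument is correct, but it takes a genuinely different route from the paper's. The paper's own proof of Lemma~\ref{lem:wd} is short and choice-free: it embeds $\Pbb^N$ in $\Pbb^M$ as a linear subspace, observes that $Z_N=\Pbb^N\cap Z_M$ is a transversal (splayed) intersection, and applies \cite[Lemma~4.1]{MR3415650} to get $s(Z_N,\Pbb^N)=[\Pbb^N]\cdot s(Z_M,\Pbb^M)$, from which the agreement of coefficients is immediate---no generators, bundles, or shadows enter. You instead fix generators and run the shadow machinery: Lemma~\ref{lem:sfroms} converts the problem into stabilization of the shadows $\uBl_N$, Theorem~\ref{thm:join} applied to linear projections handles $N'\ge r$, and---your real addition---in the range $n\le N'<r$, where the hypothesis $d>r+\dim S$ of Theorem~\ref{thm:join} genuinely fails, you reopen its proof and observe that the discrepancy $\pi_*j_*(\gamma)$ is pushed forward from the linear center $\Pbb^{N-N'-1}$, hence is a combination of $H^i$ with $i\ge N'+1$ only and cannot disturb the truncation $[\uBl^\infty]_{\le N'}$. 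That support argument is sound (every class on a linear subspace of dimension $N-N'-1$ has codimension $\ge N'+1$ in $\Pbb^N$), and the truncation bookkeeping works because multiplication by $\prod_i(1+d_iH)^{-1}$ only mixes coefficients downward. What each approach buys: the paper's proof is elementary, manifestly independent of the choice of generators, and cleanly separates well-definedness from rationality, which is established later (Proposition~\ref{pro:shadco}); your proof is longer and a priori generator-dependent (harmless, since the Segre classes it must reproduce are not), but it yields rationality of $\zeta_I(t)$ with denominator dividing $\prod_i(1+d_it)$ as a free byproduct, i.e., parts (1) and (2) of Theorem~\ref{thm:main}, and your degenerate-range refinement actually extends the truncated form of identity~\eqref{eq:nprecis} beyond the ranges $n\ge r+1$ (Proposition~\ref{pro:shadco}) and $n\ge r$ (the remark following it, via Theorem~\ref{thm:segcla}) covered in the paper. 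Two cosmetic corrections for a final write-up: stabilization should be asserted for $N\ge\max(n,r)$ rather than $N\ge r$, since the comparison is only available once both schemes $Z_N$, $Z_{N'}$ are defined ($N,N'\ge n$); and in the degenerate range one should compare $\uBl_{N'}$ with $\uBl_N$ for some $N\ge\max(n,r,N'+1)$ so that $\uBl_N=\uBl^\infty$ is already known.
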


\begin{defin}
The {\em Segre zeta function\/} of the ideal $I$ is the power series
$\zeta_I(t)$ obtained in Lemma~\ref{lem:wd}.
\qede\end{defin}

\begin{proof}[Proof of Lemma~\ref{lem:wd}]
We have to show that if $M>N$ and
\[
\iota_{N*} s(Z_N,\Pbb^N) = \sum_{i=0}^N a_i [\Pbb^{N-i}] \quad, \quad
\iota_{M*} s(Z_M,\Pbb^M) = \sum_{i=0}^M b_i [\Pbb^{M-i}] \quad, 
\]
then $b_i = a_i$ for $i=0,\dots, N$.

In this situation we can view $\Pbb^N$ as embedded in $\Pbb^M$ as the
linear subspace defined by $x_{N+1}=\cdots = x_M=0$. We have
\[
I_N = I_M \cap k[x_0,\dots, x_N]\quad,
\]
and correspondingly $Z_N = \Pbb^N \cap Z_M$. This intersection is
transversal (in fact, splayed), hence repeated application of
\cite[Lemma~4.1]{MR3415650} gives
\begin{equation}\label{eq:tranre}
s(\hZ_N,\Pbb^N) =[\Pbb^N]\cdot s(\hZ_M,\Pbb^M)\quad.
\end{equation}
Therefore
\[
\sum_{i=0}^N a_i [\Pbb^{N-i}]  = [\Pbb^N] \cdot 
\sum_{i=0}^M b_i [\Pbb^{M-i}]
\]
with the stated consequence.
\end{proof}

\begin{remark}\label{rem:less}
By essentially the same argument, we see that if $N<n$, then 
$\sum_{i=0}^N a_i H^i\cap [\Pbb^N]$ equals 
$\iota_{N*} s(Z_N,\Pbb^N)$, where now $Z_N$ is the intersection
of $Z_n$ with a general $N$-dimensional linear subspace of 
$\Pbb^n$. In particular, the first nonzero coefficient $a_s$
in $\zeta_I(t)$ occurs for $s=$ the codimension of $Z_N$
in $\Pbb^N$ for $N$ large enough; it follows that $s=\codim I$. 
Also, $a_s$ equals the degree of the top-dimensional part of 
the cycle determined by $[Z_N]$. For example, if $I$ is prime, then
\[
\zeta_I(t) = (\deg I)\, t^{\codim I} + \text{higher order terms}\quad.
\]
where $\deg I=\deg Z_N$ (for $N\gg 0$).
\qede\end{remark}

\begin{example}\label{ex:compint}
If $I$ is a complete intersection, i.e., it is generated by a regular sequence 
$(F_1,\dots, F_r)$, with $\deg F_i=d_i$, then
\[
\zeta_I(t) = \frac{d_1\cdots d_r\, t^r}{(1+d_1 t)\cdots (1+d_r t)}\quad.
\]
Indeed, $Z_N$ is then regularly embedded in $\Pbb^N$, hence its
Segre class is given by $s(Z_N,\Pbb^N) = c(N_{Z_N}\Pbb^N)^{-1}
\cap [Z_N]$ (\cite[Proposition~4.1]{85k:14004}). The normal
bundle of $Z_N$ has Chern class $(1+d_1 H)\cdots (1+d_r H)$, and
$[Z]=d_1\cdots d_r H^r$.
\qede\end{example}

\begin{example}\label{ex:notsch}
For $I\subseteq k[x_0,\dots, x_n]$, $\zeta_I(t)$ is not determined by 
the scheme $Z$ defined by $I$ in $\Pbb^n$. For example,
consider $I_1=(x_0,\dots, x_n)$ and $I_2=(x_0^2,x_1,\dots, x_n)$.
These ideals both define the empty set in $\Pbb^n$, while
\[
\zeta_{I_1}(t) = \frac{t^{n+1}}{(1+t)^{n+1}}\quad, \quad
\zeta_{I_2}(t) = \frac{2\,t^{n+1}}{(1+2t)(1+t)^n}
\]
as seen in Example~\ref{ex:compint}. We will prove that $\zeta_I(t)$
is determined by $Z$ and by the degrees of a set of generators for
$I$, provided the number of generators does not exceed $n+1$, 
see Corollary~\ref{cor:ngr}; also see Corollary~\ref{cor:ngr2} for 
a somewhat stronger statement.
\qede\end{example}

\begin{example}\label{ex:larger}
Let $I=(x_0^2-x_1^2,x_0x_1^2-x_2^3,x_0^4-x_2^4)$. Using the Macaulay2
implementation of the algorithm in~\cite{MR1956868}, we can compute
the Segre class of the scheme cut out by the generators of $I$ in
$\Pbb^9$, and this determines the first several coefficients of
$\zeta_I(t)$:
\[
\zeta_I(t) = 2t^2+6t^3-106t^4+750t^5-4138t^6+20286 t^7-92986t^8
+408750 t^9-\cdots
\]
(this computation takes several minutes on a Macbook Pro).
\qede\end{example}

The natural question is whether general statements can be made
concerning $\zeta_I(t)$. For example, it is perhaps not completely
obvious from the definition that the power series $\zeta_I(t)$ has
positive radius of convergence.  Statements restricting the type of
series $\zeta_I(t)$ can be are potentially useful in computations of 
Segre classes, as we will illustrate below (Example~\ref{ex:revisit}). 
The main result of this paper is 
the following theorem. 

\begin{theorem}\label{thm:main}
Let $I\subseteq k[x_0,\dots, x_n]$ be a homogeneous ideal, and let
$\zeta_I(t)$ be the power series defined above. Also, let
$d_0, \dots, d_r$ be the degrees of the elements in any
homogeneous basis of $I$. Then
\begin{enumerate}
\item\label{pt:rat} $\zeta_I(t)$ is rational: there exist unique relatively
prime polynomials $P_I(t),Q_I(t)\in \Zbb[t]$, with $Q_I(t)$ monic,
such that
\[
\zeta_I(t) = \frac{P_I(t)}{Q_I(t)}\quad.
\]
\item The polynomial $Q_I(t)$ divides
  $(1+d_0t)\cdots (1+d_r t)$. 
  Thus, the poles of $\zeta_I(t)$ can only occur at $-1/d_i$,
  where $d_i$ is a degree of an element in a minimal homogeneous
  basis of $I$.
\item With notation as above, $\zeta_I(t)=\dfrac{N(t)
  +(\prod_i d_i) t^{r+1} }{\prod_i (1+d_it)}$ 
  for a degree $r$ polynomial $N(t)$ with nonnegative coefficients
  and trailing term of degree~$\codim I$.
\end{enumerate}
\end{theorem}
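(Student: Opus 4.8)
The plan is to reduce Theorem~\ref{thm:main} to the case where the number of generators $r+1$ does not exceed the number of variables $n+1$, and then apply the projection formula of Theorem~\ref{thm:segcla} (more precisely Corollary~\ref{cor:stronid}) to transport the Segre class computation in large projective space down to a fixed $\Pbb^n$. The key geometric input is the observation from~\S\ref{sec:1min} and Lemma~\ref{lem:sfroms}: if $Z_N\subseteq\Pbb^N$ is cut out by the extension of $I$, and we realize $\cI_{Z_N}$ as the image of a section of $\cE=\cO(d_0)\oplus\cdots\oplus\cO(d_r)$, then $\iota_{N*}s(Z_N,\Pbb^N)$ is governed by the shadow $\uBl$ of the blow-up, and shadows behave well under the join operation $\gamma\mapsto\gamma\vee\Pbb^m$.

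First I would fix $N$ large (say $N\gg r$) and set up the projection $\rho:\Pbb^N\dashrightarrow\Pbb^n$ with center $S=\Pbb^m$, $m=N-n-1$. I would take $\cE_W=\cO(d_0)\oplus\cdots\oplus\cO(d_r)$ on $W=\Pbb^n$ and $\cE_V$ the analogous bundle on $V=\Pbb^N$; these are compatible in the sense of Definition~\ref{def:compa} because the same generators $f_0,\dots,f_r$ define matching sections away from the center. The hypothesis $r+1<\dim V-\dim S=n+1$ of Corollary~\ref{cor:stronid} is exactly the condition that the number of generators be at most $n+1$; this is where I would first reduce to that case, either by passing to a minimal generating set or by choosing $n$ large enough before invoking the zeta function's stabilization (Lemma~\ref{lem:wd}). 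Granting this, Corollary~\ref{cor:stronid} yields
\[
\iota_{N*}s(Z_N,\Pbb^N)=s(\cE_V)\cap\bigl((c(\cE_W)\cap \iota_{n*}s(Z_n,\Pbb^n))\vee\Pbb^m\bigr),
\]
and since the join acts on $A_*\Pbb^n$ by $P(H)\mapsto P(H)$ (truncation aside), in cohomological notation this reads
\[
\zeta_I(t)=s(\cE)(t)\cdot\bigl(c(\cE)(t)\cap \iota_{n*}s(Z_n,\Pbb^n)\bigr),
\]
where $s(\cE)(t)=\prod_i(1+d_i t)^{-1}$ and $c(\cE)(t)=\prod_i(1+d_i t)$. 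The crucial point is that the bracketed class $c(\cE)\cap\iota_{n*}s(Z_n,\Pbb^n)$, written as a polynomial in $H$, has degree at most $r+1$: this is precisely the vanishing statement $\{c(\cE)\cap s(Z,\Pbb^n)\}_i=0$ for $i<n-r-1$ borrowed from~\cite[Example~6.1.6]{85k:14004} and already recalled in~\eqref{eq:1min}. That gives the numerator $P_I(t)$ degree $\le r+1$, hence the denominator $\prod_i(1+d_i t)$, establishing parts~(1) and~(2).

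For part~(3), I would extract the numerator $N(t)+(\prod_i d_i)t^{r+1}$ from the above and identify its three features separately. The leading term $(\prod_i d_i)t^{r+1}$ comes from the top self-intersection $d_0\cdots d_r H^{r+1}$, matching the complete-intersection computation in Example~\ref{ex:compint}; the trailing term of degree $\codim I$ follows from Remark~\ref{rem:less}, which pins the first nonzero coefficient $a_{\codim I}$ to the degree of the top-dimensional part of the cycle $[Z_N]$. \emph{The hard part will be} the nonnegativity of the coefficients of $N(t)$. This does not follow formally from the rationality argument, since the shadow $\uBl$ and the Segre class can a priori have arbitrary signs; I expect this to require an effective-cycle interpretation of the numerator, realizing each coefficient as an intersection number of effective classes on the blow-up $B\ell_{Z_n}\Pbb^n$ (or on $\Pbb(\cE)$). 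The natural route is to rewrite $N(t)$ in terms of the shadow $\uE$ of the exceptional divisor, as in Remark~(i) after Lemma~\ref{lem:sfroms}, where $s(Z,V)=s(\cE)\cap\uE$ exhibits the relevant class as a push-forward of effective cycles supported on $E$; nonnegativity would then come from positivity of $c_1(\cO(1))$ along the fibers of $\Pbb(\cE)\to\Pbb^n$ and from the $\Pbb^r$-bundle structure making the fiberwise degrees manifestly nonnegative. Pinning down this positivity cleanly — so that it survives the twist by $\cO(d)$ and the truncation — is where I would expect to spend the most effort.
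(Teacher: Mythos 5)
Your treatment of rationality and the pole structure is exactly the paper's: reduce to $n\ge r+1$ and apply Corollary~\ref{cor:stronid} to the projection $\Pbb^N\dashrightarrow\Pbb^n$ with $\cE_W=\cO(d_0)\oplus\cdots\oplus\cO(d_r)$ and the matching sections $(f_0,\dots,f_r)$; this is precisely the paper's Proposition~\ref{pro:shadco}, and your derivation of parts (1) and (2), of the leading coefficient $\prod_i d_i$ via B\'ezout, and of the trailing term via Remark~\ref{rem:less} all agree with the paper. One small caution on the reduction: of the two options you offer, only ``choose $n$ large enough and invoke Lemma~\ref{lem:wd}'' works; passing to a minimal generating set does not by itself ensure $r+1\le n$, since ideals in $k[x_0,\dots,x_n]$ can require arbitrarily many minimal generators (also, the hypothesis of Corollary~\ref{cor:stronid} is $r+1<n+1$, i.e.\ at most $n$ generators, not $n+1$).

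The genuine gap is the nonnegativity of $N(t)$, which you explicitly leave open, and the route you sketch would not close it as stated. Relative positivity of $c_1(\cO(1))$ along the fibers of $\Pbb(\cE)\to\Pbb^n$ is not enough to make classes such as $\alpha_*\bigl(c_1(\cO(1))^j\cap[E]\bigr)$ effective; for that one needs $\cO_{\Pbb(\cE)}(1)$ to be globally generated, which uses the global hypothesis that $\cE=\oplus_i\cO(d_i)$ with all $d_i>0$. Moreover, your identity $s(Z,\Pbb^n)=s(\cE|_Z)\cap\uE$ does show (by the projection formula) that the numerator equals $\iota_*\uE$, but $\uE$ itself is $c(\cE|_Z)\cap\alpha_*\bigl(c(\cO(-1))^{-1}\cap[E]\bigr)$, so one must also know that capping an effective class with the Chern class of this bundle preserves effectivity --- again a positivity theorem for globally generated bundles, not a fiberwise statement. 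The paper resolves all of this in one stroke: the numerator $\prod_i(1+d_iH)\cdot\iota_{n*}s(Z_n,\Pbb^n)$ is rewritten as $\iota_{n*}q_*\bigl(c(\xi)\cap[\Pbb(C\oplus 1)]\bigr)$, where $C$ is the normal cone of $Z_n$ in $\Pbb^n$ embedded in $N=\iota^*(\cO(d_0)\oplus\cdots\oplus\cO(d_r))$, $q:\Pbb(N\oplus 1)\to Z_n$ is the projection, and $\xi$ is the universal quotient bundle; since $N\oplus 1$ is globally generated, so is its quotient $\xi$, and \cite[Example~12.1.7(a)]{85k:14004} then gives that this class is represented by a nonnegative cycle. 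That single representation simultaneously delivers the degree bound on the numerator ($\rk\xi=r+1$, $\dim\Pbb(C\oplus 1)=n$), the B\'ezout leading term, and the nonnegativity --- it is the one concrete idea missing from your proposal.
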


Part (3) has the following immediate consequence 
(cf.~Example~\ref{ex:notsch}).

\begin{corol}\label{cor:ngr}
The Segre zeta function $\zeta_I(t)$ of an ideal $I$ generated by
homogeneous polynomials $f_0,\dots, f_r\in k[x_0,\dots, x_n]$
is determined by the Segre class of the zero-scheme $Z=Z_n$ 
of $I$ in $\Pbb^n$ and by the integers $\deg f_i$, provided $n\ge r$.
\end{corol}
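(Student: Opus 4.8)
The plan is to read off the entire zeta function from the given data by combining the structural description in Theorem~\ref{thm:main}(3) with the interpretation of the low-order coefficients supplied by Lemma~\ref{lem:wd}. By part~(3), one has
\[
\zeta_I(t) = \frac{P(t)}{Q(t)}, \qquad Q(t) = \prod_{i=0}^r (1+d_i t),
\]
where $P(t) = N(t) + (\prod_i d_i)\,t^{r+1}$ is a polynomial of degree $r+1$, with $d_i = \deg f_i$. The denominator $Q(t)$ is manifestly determined by the degrees $d_i$ alone, and the leading coefficient $\prod_i d_i$ of $P(t)$ is likewise determined by the $d_i$. Thus the only remaining unknowns are the $r+1$ coefficients of $N(t)$, that is, the coefficients of $t^0, t^1, \dots, t^r$ in $P(t)$.

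First I would exploit the identity $P(t) = \zeta_I(t)\, Q(t)$, expanding $\zeta_I(t) = \sum_{i\ge 0} a_i t^i$. Since $Q(t)$ has constant term $1$, the coefficient of $t^k$ in $P(t)$ is an explicit $\Zbb$-linear combination of $a_0, \dots, a_k$ whose coefficients are built from the $d_i$. In particular, for $0 \le k \le r$ each such coefficient of $P(t)$ is determined by $a_0, \dots, a_r$ together with the degrees. Hence $N(t)$, and therefore the full numerator $P(t)$, is determined by $a_0, \dots, a_r$ and the integers $d_i$.

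It remains to observe that the hypotheses supply exactly these coefficients. By Lemma~\ref{lem:wd} applied with $N = n$,
\[
\iota_{n*} s(Z_n, \Pbb^n) = \sum_{i=0}^n a_i H^i \cap [\Pbb^n],
\]
so knowledge of the Segre class $s(Z,\Pbb^n)$, equivalently of its push-forward to $\Pbb^n$, is precisely knowledge of $a_0, \dots, a_n$. Since $n \ge r$ by hypothesis, this list includes $a_0, \dots, a_r$. Combining these with the degrees $d_i$ determines both $P(t)$ and $Q(t)$, hence $\zeta_I(t) = P(t)/Q(t)$, as claimed.

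There is no serious obstacle here: the content is carried entirely by Theorem~\ref{thm:main}(3). The only point deserving care is the bookkeeping that pins down the precise numerical bound. The numerator has degree $r+1$, but its top coefficient is already known from the degrees, so one needs only the $r+1$ values $a_0, \dots, a_r$, and this is exactly why the hypothesis $n \ge r$ (rather than $n \ge r+1$) suffices.
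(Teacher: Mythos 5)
Your proposal is correct and takes essentially the same route as the paper, which presents Corollary~\ref{cor:ngr} as an immediate consequence of Theorem~\ref{thm:main}(3): the denominator and the top coefficient $\prod_i d_i$ of the numerator are fixed by the degrees, and the remaining coefficients of $N(t)$ are recovered from $a_0,\dots,a_r$, which the push-forward of $s(Z,\Pbb^n)$ supplies via Lemma~\ref{lem:wd} once $n\ge r$. This is precisely the truncation bookkeeping the paper records in the Remark following the corollary and carries out explicitly in Example~\ref{ex:revisit}.
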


\begin{remark}
More precisely, if $n\ge r$, then the components $s(Z,\Pbb^n)_j$ with 
$j\ge n-r$ (together with the integers $\deg f_i$) suffice to determine 
$\zeta_I(t)$, since they suffice to determine the polynomial $N(t)$ in
Theorem~\ref{thm:main}~(3).

In fact, if $n>r$, then we can replace $Z$ by the intersection of $Z$
with a general $\Pbb^r$ and reduce to the case $n=r$.
This follows from~\cite[Lemma~4.1]{MR3415650}, which shows
that this operation does not change the terms of codimension
$\le r$ in the Segre class, see~\eqref{eq:tranre}.
(Cf.~Remark~\ref{rem:less}.)
\qede\end{remark}

\begin{example}\label{ex:revisit}
As an illustration of how Theorem~\ref{thm:main} may be useful in 
computations, revisit Example~\ref{ex:larger}, taking 
Corollary~\ref{cor:ngr} into account. 
Here $n=r=2$, $d_0=2$, $d_1=3$, $d_2=4$.  
By Theorem~\ref{thm:main} (3),
\[
\zeta_I(t) = \frac{N(t)+24t^3}{(1+2t)(1+3t)(1+4t)}\quad,
\]
with $\deg N(t)\le 2$. It follows that $\zeta_I(t)$ is determined by
the coefficients of the terms of degree $\le 2$, i.e., by the Segre
class of the scheme cut out by the generators of $I$ in $\Pbb^2$. The
same implementation of the algorithm used in Example~\ref{ex:larger}
computes this information,
\[
\zeta_I(t) \equiv 2t^2 \mod{t^3}\quad,
\]
in less than half a second. It follows that
\[
N(t) = [(2t^2+6t^3)(1+2t)(1+3t)(1+4t)]_2 = 2t^2
\]
and therefore the same data obtained above,
\begin{align*}
\zeta_I(t) &= \frac{2t^2+24t^3}{(1+2t)(1+3t)(1+4t)} \\ 
&=2t^2+6t^3-106t^4+750t^5-4138t^6+20286 t^7-92986t^8+408750 t^9-\cdots\quad,
\end{align*}
is found with a roughly 1000-fold increase in speed with respect to the
more direct computation. (And $\zeta_I(t)$ is now known to all
orders.)  
\qede\end{example}

Theorem~\ref{thm:main} admits the following refinement, which also
leads to a strengthening of Corollary~\ref{cor:ngr}.
Recall that the {\em degree sequence\/} of an ideal is the sequence 
$d_0\le \cdots \le d_r$ of degrees of elements of any homogeneous 
minimal basis for the ideal; this sequence does not depend on the 
chosen minimal basis. We also recall that a {\em reduction\/} of an 
ideal $I$ is an ideal $J\subseteq I$ with the same integral closure as $I$.
We denote by $\overline I$ the integral closure of $I$.

\begin{prop}\label{pro:redux}
$\zeta_{\overline I}(t)=\zeta_I(t)$.
\end{prop}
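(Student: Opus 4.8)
The plan is to reduce the statement to the assertion that the push-forward to projective space of the Segre class of a subscheme depends only on the integral closure of its defining ideal, and then to verify this level by level. By Lemma~\ref{lem:wd}, the series $\zeta_I(t)$ and $\zeta_{\overline I}(t)$ are completely determined by the push-forwards $\iota_{N*}s(Z_N,\Pbb^N)$ of the Segre classes of the schemes cut out by $I_N$ and $(\overline I)_N$ in $\Pbb^N$, for every $N\ge n$. So it suffices to show that for each such $N$ the subschemes of $\Pbb^N$ defined by $I_N$ and $(\overline I)_N$ have the same push-forward Segre class. First I would observe that $I_N$ and $(\overline I)_N$ have the same integral closure: since $I\subseteq\overline I$ we get $I_N\subseteq(\overline I)_N$, and conversely any $a\in\overline I$ satisfies a monic relation $a^m+c_1a^{m-1}+\cdots+c_m=0$ with $c_k\in I^k$, which persists after adjoining the variables $x_{n+1},\dots,x_N$; hence $(\overline I)_N\subseteq\overline{I_N}$, and taking closures yields $\overline{(\overline I)_N}=\overline{I_N}$. (In other words, integral closure commutes with the flat polynomial extension.) Everything therefore reduces to the following key point.

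\emph{Key Lemma.} If $Y,Y'$ are subschemes of a variety $V$ whose ideal sheaves have the same integral closure, then their Segre classes agree after push-forward to $V$.

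To prove this I would pass to the normalized blow-up. Write $\nu:\widetilde W\to V$ for the normalization of the blow-up of $V$ along $Y$. Since the normalized blow-up of an ideal depends only on its integral closure, $\widetilde W$ is simultaneously the normalized blow-up along $Y'$ and dominates the blow-ups along both. On $\widetilde W$ the extensions $\cI_Y\cO_{\widetilde W}$ and $\cI_{Y'}\cO_{\widetilde W}$ are both invertible, being pull-backs of the tautological invertible sheaves on the respective blow-ups; and an invertible ideal sheaf on a normal scheme is integrally closed, so, sharing their integral closure, the two coincide. This gives a single effective Cartier divisor $\widetilde E$ with $\cI_Y\cO_{\widetilde W}=\cI_{Y'}\cO_{\widetilde W}=\cO(-\widetilde E)$, and the scheme-theoretic inverse image under $\nu$ of either $Y$ or $Y'$ is exactly $\widetilde E$. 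Since $\nu$ is proper and birational, the birational invariance of Segre classes (see \cite[Proposition~4.2]{85k:14004}) yields $\iota_{Y*}s(Y,V)=\nu_*s(\widetilde E,\widetilde W)=\iota_{Y'*}s(Y',V)$, proving the Key Lemma. Applying it with $V=\Pbb^N$ to the pair $I_N$, $(\overline I)_N$ then gives equality of the coefficients of $\zeta_I(t)$ and $\zeta_{\overline I}(t)$ at every level, hence the Proposition.

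The main obstacle is not the intersection-theoretic step, which is a direct appeal to birational invariance, but the commutative-algebra input that identifies the two exceptional divisors on $\widetilde W$: specifically, the facts that the normalized blow-up depends only on the integral closure of the ideal, and that an invertible ideal on a normal scheme is integrally closed. Both are standard, but they are the real crux, since together they force $\cI_Y\cO_{\widetilde W}=\cI_{Y'}\cO_{\widetilde W}$. A pleasant byproduct of this formulation is that it proves the stronger claim advertised in the introduction, namely $\zeta_I(t)=\zeta_J(t)$ whenever $J$ is a reduction of $I$, since the argument uses only that $I_N$ and $J_N$ share an integral closure.
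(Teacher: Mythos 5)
Your proof is correct, and it shares the paper's overall skeleton: reduce, via the compatibility of integral closure with the extension $k[x_0,\dots,x_n]\hookrightarrow k[x_0,\dots,x_N]$, to showing that the two subschemes of $\Pbb^N$ have equal pushed-forward Segre classes, and conclude by the birational invariance of Segre classes (\cite[Proposition~4.2]{85k:14004}) applied to a single proper birational model over $\Pbb^N$ on which both ideals generate the same invertible sheaf. Where you genuinely differ is in how that model is produced. The paper stays with the \emph{un-normalized} blow-up $B\ell_{\overline Z}\Pbb^N$: by \cite[Proposition~1.44]{MR2153889}, the inclusion of Rees algebras $\Rees(I)\subseteq\Rees(\overline I)$ is finite, giving a finite morphism $B\ell_{\overline Z}\Pbb^N\to B\ell_{Z}\Pbb^N$ under which $I\cdot\cO$ becomes invertible and equal to the exceptional divisor of $B\ell_{\overline Z}\Pbb^N$; no normalization is needed. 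You instead pass to the \emph{normalized} blow-up $\widetilde W$ and use two standard facts (the normalized blow-up depends only on the integral closure; an invertible ideal on a normal variety is integrally closed) to force $\cI_Y\cO_{\widetilde W}=\cI_{Y'}\cO_{\widetilde W}$. Your Key Lemma is symmetric in the two ideals, so it proves in one stroke the statement for \emph{any} pair of ideals with the same integral closure --- in particular $\zeta_I(t)=\zeta_J(t)$ for a reduction $J$ of $I$ directly, where the paper's asymmetric argument obtains this by applying the proposition twice through $\overline J=\overline I$. The costs are the extra normalization and the step, which you assert rather than prove, that the two extended ideals on $\widetilde W$ share an integral closure; this is fine, as it follows from the same persistence argument you spelled out for the polynomial extension (each ideal is contained in the integral closure of the other, and integral dependence persists under ring maps). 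One small economy worth noting: your containment chain $I_N\subseteq(\overline I)_N\subseteq\overline{I_N}$ suffices for the reduction and neatly sidesteps the full commutation of integral closure with extension that the paper invokes.
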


\begin{corol}\label{cor:ref}
The results of parts (2) and (3) of Theorem~\ref{thm:main} hold with
$d_0\le \cdots\le d_r$ the degree sequence of any homogeneous
reduction of $I$:
\begin{itemize}
\item $Q_I(t)\,|\, \prod_i (1+d_i t)$;
\item $\zeta_I(t)=\dfrac {N(t) +(\prod_i d_i) t^{r+1} }{\prod_i (1+d_it)}$ 
  for a degree $r$ polynomial $N(t)$ with nonnegative coefficients
  and trailing term of degree $\codim I$.
\end{itemize}
\end{corol}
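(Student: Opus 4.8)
The plan is to obtain Corollary~\ref{cor:ref} as a formal consequence of Proposition~\ref{pro:redux} together with Theorem~\ref{thm:main}, by passing to a reduction and invoking the theorem for that smaller ideal. The point of the refinement is that a reduction of $I$ may have fewer generators, and generators of lower degree, than a minimal basis of $I$ itself, so the divisibility and the shape of the numerator hold with respect to a potentially much smaller degree sequence.

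First I would fix a homogeneous reduction $J$ of $I$ whose degree sequence is $d_0\le\cdots\le d_r$. By the definition of reduction, $\overline J=\overline I$. Applying Proposition~\ref{pro:redux} to $I$ and to $J$ separately then yields
\[
\zeta_I(t)=\zeta_{\overline I}(t)=\zeta_{\overline J}(t)=\zeta_J(t),
\]
so that $I$ and its reduction $J$ have the same Segre zeta function. Since $J\subseteq k[x_0,\dots,x_n]$ is itself a homogeneous ideal, Theorem~\ref{thm:main} applies verbatim to $J$, using $d_0,\dots,d_r$ as the degrees of a minimal homogeneous basis of $J$. This produces the unique relatively prime representation $\zeta_J(t)=P_J(t)/Q_J(t)$ with $Q_J$ monic, the divisibility $Q_J(t)\mid\prod_i(1+d_it)$ from part~(2), and the expression $\zeta_J(t)=\bigl(N(t)+(\prod_i d_i)t^{r+1}\bigr)/\prod_i(1+d_it)$ from part~(3), where $N(t)$ has degree $r$, nonnegative coefficients, and trailing term of degree $\codim J$.

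It then remains to transfer these conclusions back to $I$. Because $\zeta_I=\zeta_J$ and the representation in Theorem~\ref{thm:main}~(1) is \emph{unique} among relatively prime pairs with monic denominator, we must have $P_I=P_J$ and $Q_I=Q_J$; hence $Q_I(t)\mid\prod_i(1+d_it)$, which is the first bullet, and the displayed formula for $\zeta_J$ is literally a formula for $\zeta_I$, which is the second bullet. The only genuinely substantive check is that the trailing degree is $\codim I$ rather than $\codim J$: this follows because $\overline J=\overline I$ forces $\sqrt J=\sqrt I$ (integral closure does not change the radical), and codimension depends only on the radical, so $\codim I=\codim J$. I do not expect a real obstacle here; the corollary is essentially a bookkeeping argument, with the uniqueness clause of part~(1) doing the work of matching numerator and denominator and the radical-invariance of codimension accounting for the trailing term.
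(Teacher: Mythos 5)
Your proof is correct and follows exactly the route the paper intends: the paper states that Corollary~\ref{cor:ref} is an immediate consequence of Theorem~\ref{thm:main} and Proposition~\ref{pro:redux}, and your write-up simply fills in the bookkeeping (uniqueness of the representation $P_I/Q_I$, and $\codim I=\codim J$ via equality of radicals) that the paper leaves implicit. No gaps.
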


Corollary~\ref{cor:ref} is an immediate consequence of 
Theorem~\ref{thm:main} and Proposition~\ref{pro:redux}. We also
highlight the following consequence.

\begin{corol}\label{cor:poles}
Every pole of the Segre zeta function of $I$ is an element of the
degree sequence of a minimal homogeneous reduction of $I$.
\end{corol}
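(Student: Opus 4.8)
The plan is to reduce the statement to an application of Theorem~\ref{thm:main}(2) to a suitably chosen ideal, exploiting the invariance of the Segre zeta function under passage to a reduction. First I would invoke Proposition~\ref{pro:redux}: if $J$ is any reduction of $I$, then $\overline J=\overline I$, so
\[
\zeta_J(t)=\zeta_{\overline J}(t)=\zeta_{\overline I}(t)=\zeta_I(t)\quad.
\]
By the uniqueness asserted in Theorem~\ref{thm:main}(1), the reduced denominators then agree: $Q_I(t)=Q_J(t)$.

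Next I would take $J$ to be a minimal homogeneous reduction of $I$, and let $d_0\le\cdots\le d_r$ be its degree sequence, i.e.\ the degrees occurring in a minimal homogeneous basis of $J$. Applying Theorem~\ref{thm:main}(2) to $J$ itself gives $Q_J(t)\mid\prod_i(1+d_it)$ (this is precisely the content already recorded in Corollary~\ref{cor:ref}). Combining with $Q_I=Q_J$, every root of the reduced denominator $Q_I(t)$ is a root of $\prod_i(1+d_it)$. Now $-1/d$ is a pole of $\zeta_I(t)$ exactly when $(1+dt)$ divides $Q_I(t)$, so in particular $-1/d$ is a root of $\prod_i(1+d_it)$; hence $-1/d=-1/d_i$ for some $i$, i.e.\ $d=d_i$ belongs to the degree sequence of the minimal homogeneous reduction $J$. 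This is the assertion.

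The one point requiring care is the existence of a minimal homogeneous reduction with a well-defined degree sequence. Minimal reductions are guaranteed to exist when the ground field is infinite, and in the graded setting one can arrange them to be homogeneous; to handle an arbitrary $k$ I would first observe that $\zeta_I(t)$ is unchanged under the flat base change $k\hookrightarrow\overline k$, since Segre classes commute with flat pull-back (\cite[Proposition~4.2]{85k:14004}) and the coefficients $a_i$ are read off from the hyperplane-class components of $\iota_{N*}s(Z_N,\Pbb^N)$, which are preserved. Thus it is harmless to assume $k$ infinite, where a minimal homogeneous reduction $J$ exists, and the argument above applies verbatim. The main obstacle is therefore not the intersection-theoretic content—which is entirely furnished by Proposition~\ref{pro:redux} and Theorem~\ref{thm:main}—but the commutative-algebra bookkeeping ensuring that a minimal homogeneous reduction, together with its degree sequence, is available over the field at hand.
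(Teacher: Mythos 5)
Your proof is correct and follows essentially the same route as the paper: Proposition~\ref{pro:redux} gives $\zeta_J(t)=\zeta_I(t)$ for any reduction $J$ of $I$, and applying Theorem~\ref{thm:main}(2) to a minimal homogeneous reduction $J$ (this combination is precisely Corollary~\ref{cor:ref}) forces every pole of $\zeta_I(t)=\zeta_J(t)$ to occur at $-1/d_i$ with $d_i$ in the degree sequence of $J$. The extra care you devote to the existence of a minimal homogeneous reduction over an arbitrary field goes beyond what the paper records, but it does not alter the argument.
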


\begin{proof}[Proof of Proposition~\ref{pro:redux}]
The integral closure of the extension of $I$ in $k[x_0,\dots, x_N]$ 
equals the extension of the integral closure,
so it suffices to verify that if $Z$, resp., $\overline Z$ are the schemes 
defined by $I$, resp.~$\overline I$ in $\Pbb^n$, then $s(Z,\Pbb^n)
=s(\overline Z,\Pbb^n)$. (Since $Z$, $\overline Z$ have the same
support, their Chow groups may be identified.) By 
\cite[Proposition~1.44]{MR2153889}, there is a natural finite morphism 
of blow-ups $B\ell_{\overline Z}\Pbb^n \to B\ell_Z \Pbb^n$. It follows
that the inverse image of $Z$ in $B\ell_{\overline Z}\Pbb^n$ equals
the exceptional divisor of $B\ell_{\overline Z}\Pbb^n$,
and the equality follows by the birational invariance of Segre classes
(\cite[Proposition~4.2]{85k:14004}).
\end{proof}

Corollary~\ref{cor:ref} could also be helpful in the construction of
algorithms computing Segre classes: an ideal may be replaced with 
a minimal homogeneous reduction without affecting the computation 
of the Segre zeta function, and in general this reduces the length of 
the degree sequence used to construct a denominator for 
$\zeta_I(t)$. We formalize this observation as follows.

\begin{corol}\label{cor:ngr2}
Let $I\subseteq k[x_0,\dots, x_n]$ be an ideal, and let $Z$ be the
scheme defined by $I$ in~$\Pbb^n$. Then the Segre zeta function 
$\zeta_I(t)$ is determined by the degree sequence 
$d_0\le \cdots \le d_r$ of a minimal reduction of $I$ and by 
$\iota_* s(Z,\Pbb^n)_i$, $i\ge n-r$, provided $n\ge r$.
\qede\end{corol}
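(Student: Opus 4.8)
The plan is to combine the two results that immediately precede Corollary~\ref{cor:ngr2}: the reduction-invariance supplied by Proposition~\ref{pro:redux} and the determination statement of Corollary~\ref{cor:ngr} (sharpened in the following remark). The point of Corollary~\ref{cor:ngr2} over Corollary~\ref{cor:ngr} is twofold: first, we replace the generating degrees $\deg f_i$ of an arbitrary homogeneous basis by the degree sequence $d_0\le\cdots\le d_r$ of a \emph{minimal reduction}, which is typically shorter; and second, we observe that only the top components $\iota_* s(Z,\Pbb^n)_i$ with $i\ge n-r$ of the Segre class are actually needed. So the strategy is to reduce the general ideal $I$ to a minimal reduction $J$, apply Theorem~\ref{thm:main}~(3) to $J$, and count degrees of freedom in the resulting rational expression.

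First I would invoke Proposition~\ref{pro:redux}: if $J\subseteq I$ is a minimal reduction of $I$, then $J$ has the same integral closure as $I$, so $\zeta_I(t)=\zeta_{\overline I}(t)=\zeta_{\overline J}(t)=\zeta_J(t)$. Thus computing $\zeta_I(t)$ is the same as computing $\zeta_J(t)$, and $J$ is generated by homogeneous elements whose degrees are exactly the degree sequence $d_0\le\cdots\le d_r$. Next, apply Theorem~\ref{thm:main}~(3) (equivalently Corollary~\ref{cor:ref}) to write
\[
\zeta_I(t)=\zeta_J(t)=\frac{N(t)+(\prod_i d_i)\,t^{r+1}}{\prod_i(1+d_i t)}\quad,
\]
where $N(t)$ is a polynomial of degree at most $r$ with trailing term of degree $\codim I$. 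The denominator $\prod_i(1+d_i t)$ is completely determined by the degree sequence, and the leading numerator term $(\prod_i d_i)t^{r+1}$ is as well. So the only unknown is the polynomial $N(t)$, which has degree $\le r$, i.e., is determined by its coefficients in degrees $0,\dots,r$.

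The crux is then a counting argument: since $N(t)$ has degree $\le r$, it is pinned down by the coefficients $a_0,\dots,a_r$ of $\zeta_I(t)$ in degrees up to $r$. Indeed, clearing the denominator, the coefficients of $N(t)$ are fixed $\Zbb$-linear combinations of $a_0,\dots,a_r$ and of the known quantities $d_0,\dots,d_r$, exactly as carried out in Example~\ref{ex:revisit}. Now by the remark following Corollary~\ref{cor:ngr} (which rests on~\eqref{eq:tranre} and~\cite[Lemma~4.1]{MR3415650}), the coefficients $a_0,\dots,a_r$ of $\zeta_I(t)$ — equivalently the components of the Segre class in codimension $\le r$ — are exactly the pushed-forward components $\iota_* s(Z,\Pbb^n)_i$ for $i\ge n-r$, provided $n\ge r$ so that these components live in $\Pbb^n$. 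Assembling these pieces, $\iota_* s(Z,\Pbb^n)_i$ for $i\ge n-r$ determine $a_0,\dots,a_r$, which determine $N(t)$, which together with the degree sequence determines $\zeta_I(t)$ to all orders.

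The main obstacle I anticipate is purely bookkeeping: verifying that the truncation to codimension $\le r$ of $\iota_* s(Z,\Pbb^n)$ genuinely recovers $a_0,\dots,a_r$ when one has passed from $I$ to its reduction $J$. One must check that the scheme $Z$ defined by $I$ and the scheme defined by $J$ in $\Pbb^n$ have the same Segre class in the relevant codimensions — but this is exactly the content of Proposition~\ref{pro:redux} at the level of $\Pbb^n$, whose proof shows $s(Z,\Pbb^n)=s(\overline Z,\Pbb^n)$ outright, so no codimension truncation is even lost. Hence the statement follows formally, and I expect the proof to be short once these ingredients are aligned.
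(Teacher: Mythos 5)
Your proof is correct and takes essentially the same route the paper intends: the paper states Corollary~\ref{cor:ngr2} without a separate proof, as the immediate formalization of Proposition~\ref{pro:redux} combined with Corollary~\ref{cor:ref} (i.e., Theorem~\ref{thm:main}~(3) applied to the reduction) and the observation, in the remark following Corollary~\ref{cor:ngr}, that only the codimension~$\le r$ components of $\iota_*s(Z,\Pbb^n)$ are needed to pin down $N(t)$ --- which is exactly the chain you assemble. Your final bookkeeping point, that $s(Z,\Pbb^n)$ is unchanged when $I$ is replaced by its reduction because both have the Segre class of the scheme defined by $\overline I$, is the right way to close the gap and is precisely what the proof of Proposition~\ref{pro:redux} provides.
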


\smallskip

The main ingredient in the proof of Theorem~\ref{thm:main} will be the
following statement, where we use the notation introduced at the 
beginning of this section.

\begin{prop}\label{pro:shadco}
Let $I=(f_0,\dots, f_r)\subseteq k[x_0,\dots, x_n]$ be an ideal
generated by homogeneous polynomials $f_i$, $i=0,\dots, r$, and let
$d_i=\deg f_i$. For $N\ge n$, let
$Z_N\overset{\iota_N}\hookrightarrow\Pbb^N$ be the subscheme defined
by $f_0,\dots, f_r$ in $\Pbb^N$ (as above).
\begin{equation}\label{eq:nprecis}
\text{If $n\ge r+1$, then}\quad\iota_{N*}s(Z_N,\Pbb^N) 
=\left[ \frac{[\prod_i (1+d_i H)\cdot \iota_{n*} s(Z_n,\Pbb^n)]_n}
{\prod_i (1+d_i H)}\right]_N \quad.
\end{equation}
\end{prop}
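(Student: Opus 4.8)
The plan is to apply Corollary~\ref{cor:stronid} to the projection $\rho:\Pbb^N\dashrightarrow\Pbb^n$ with center at the linear subspace $S=\Pbb^m$, $m=N-n-1$, which was identified at the start of this section as a projection in the sense of \S\ref{sec:funsha}. The two subschemes are $Z=Z_n\subseteq\Pbb^n$ (playing the role of $Z$ in the corollary) and $\hZ=Z_N\subseteq\Pbb^N$ (playing the role of $\hZ$). First I would set up the compatible bundles: since $Z_n$, resp.~$Z_N$, is cut out by $f_0,\dots,f_r$ as sections of $\cE_W=\bigoplus_i\cO_{\Pbb^n}(d_i)$, resp.~$\cE_V=\bigoplus_i\cO_{\Pbb^N}(d_i)$, and these restrict to isomorphic bundles over $\Pbb^N\smallsetminus\Pbb^m$ with matching sections (both given by the same forms $f_i$, which do not involve the variables defining the center), the hypotheses on compatible vector bundles and matching sections in Theorem~\ref{thm:segcla} are satisfied. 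The rank is $r+1$ and $\dim V-\dim S=N-m=n+1$, so the numerical condition $r+1<(\dim V-\dim S)$ of Corollary~\ref{cor:stronid} becomes exactly $r+1<n+1$, i.e.~$n\ge r+1$, which is the stated hypothesis.

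With this setup, Corollary~\ref{cor:stronid} gives
\[
\iota_{N*}s(Z_N,\Pbb^N)=s(\cE_V)\cap\big((c(\cE_W)\cap\iota_{n*}s(Z_n,\Pbb^n))\vee\Pbb^m\big)\quad.
\]
The next step is to translate each piece into the cohomological polynomial notation set up earlier in this section. Here $c(\cE_W)=\prod_i(1+d_iH)$ on $\Pbb^n$ and $s(\cE_V)=\prod_i(1+d_iH)^{-1}$ on $\Pbb^N$, so I would write the class $c(\cE_W)\cap\iota_{n*}s(Z_n,\Pbb^n)$ as the polynomial $\prod_i(1+d_iH)\cdot\iota_{n*}s(Z_n,\Pbb^n)$, truncated to degree $\le n$ since it lives in $A_*\Pbb^n$; this truncation is what the notation $[\,\cdots\,]_n$ records. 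The join $\gamma\mapsto\gamma\vee\Pbb^m$ then acts as the identity on polynomials in $H$ (sending $H^i\in A^i\Pbb^n$ to $H^i\in A^i\Pbb^N$), so it simply reinterprets this degree-$\le n$ polynomial as a class on $\Pbb^N$. Finally, capping with $s(\cE_V)=1/\prod_i(1+d_iH)$ on $\Pbb^N$ and truncating to degree $\le N$ yields exactly the right-hand side of~\eqref{eq:nprecis}.

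The main thing requiring care is the bookkeeping of truncations, since the join is emphatically not a ring homomorphism: one must first truncate the product $\prod_i(1+d_iH)\cdot\iota_{n*}s(Z_n,\Pbb^n)$ to degree $n$ (because it is computed inside $A_*\Pbb^n$) before transporting it via the join and dividing by $\prod_i(1+d_iH)$ in $A_*\Pbb^N$. Getting the inner $[\,\cdot\,]_n$ and outer $[\,\cdot\,]_N$ in the correct order is the only genuinely substantive point; everything else is a direct reading of Corollary~\ref{cor:stronid} through the dictionary between Chern/Segre operators of split bundles and polynomials in $H$, together with the explicit description of $\gamma\mapsto\gamma\vee\Pbb^m$ as $P(H)\mapsto P(H)$. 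I would also note that the compatibility of the two classes $[B\ell_{Z_N}\Pbb^N]$ and $[B\ell_{Z_n}\Pbb^n]$ is exactly the content already verified in the proof of Theorem~\ref{thm:segcla}, so no additional geometric input is needed.
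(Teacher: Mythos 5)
Your proposal is correct and follows essentially the same route as the paper's own proof: both apply Corollary~\ref{cor:stronid} to the projection $\rho:\Pbb^N\dashrightarrow\Pbb^n$ with center $\Pbb^m$, $m=N-n-1$, with the compatible split bundles $\bigoplus_i\cO(d_i)$ and the sections given by the forms $f_i$ on both sides, and both reduce the hypothesis $n\ge r+1$ to the numerical condition $r+1<\dim V-\dim S=n+1$. Your added care about the order of the truncations $[\,\cdot\,]_n$ and $[\,\cdot\,]_N$ is a point the paper leaves implicit, but it is the correct reading of the identity in Corollary~\ref{cor:stronid} in the polynomial notation.
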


Identity~\eqref{eq:nprecis} should be compared with~\eqref{eq:1min}.
As promised in~\S\ref{sec:1min}, the rational function expressed 
by~\eqref{eq:1min} is independent of all choices.

\begin{proof}
We apply Corollary~\ref{cor:stronid} with $W=\Pbb^n$, $V=\Pbb^N$, $\rho:
\Pbb^N \dashrightarrow \Pbb^n$ the projection with center at $\Pbb^m$,
$m=N-n-1$.  We view $I\subseteq k[x_0,\dots, x_n]$ with generators
$f_0,\dots, f_r$ in degree $d_0,\dots, d_r$, as the ideal of the
zero-scheme $Z=Z_n$ of the section $s_W=(f_0,\dots, f_r)$ of
$\cE_W=\cO_{\Pbb^n}(d_0)\oplus\cdots\oplus \cO_{\Pbb^n}(d_r)$. We take
$\cE_V=\cO_{\Pbb^N}(d_0)\oplus\cdots\oplus \cO_{\Pbb^N}(d_r)$; it is
clear that $\cE_W$ and $\cE_V$ are compatible in the sense of
Definition~\ref{def:compa}.  It is also clear that the section
$s_V=(f_0,\dots, f_r)$ {\em of $\cE_V$\/} is compatible with $s_W$,
and $\hZ=Z_N$ is its zero-scheme. We
are therefore in the situation of Corollary~\ref{cor:stronid}, and we can
conclude that
\[
\hi_* s(\hZ,V) = s(\cE_V)\cap \left( (c(\cE_W)\cap i_* s(Z,W))\vee S\right)\quad.
\]
if $r+1<(\dim V-\dim S)=N-m=n+1$, i.e., $n\ge r+1$. This gives~\eqref{eq:nprecis} 
as needed.
\end{proof}

\begin{remark}
The same argument, using Theorem~\ref{thm:segcla}, gives the statement:
\[
\text{If $n\ge r$, then:}\quad\iota_{N*}s(Z_N,\Pbb^N) 
=\left[ 1-\frac{[(\prod_i (1+d_i H)(1-
\iota_{n*}s(Z_n,\Pbb^n)))]_n}{\prod_i (1+d_i H)}\right]_N \quad.
\]
This is stronger than~\eqref{eq:nprecis}, in the sense that 
the advantage of computing the `input' Segre class at numerator 
in $\Pbb^r$, i.e., with $n=r$, rather than $\Pbb^{r+1}$ may be 
substantial. (In our illustrative Examples~\ref{ex:larger} 
and~\ref{ex:revisit}, the computation in $\Pbb^3$ takes 
about twice as long as the computation in $\Pbb^2$.)
This advantage is absorbed by the fact that the term of
degree $r+1$ in the numerator of~\eqref{eq:nprecis}
is in fact known {\em a priori,\/} as stated in 
Theorem~\ref{thm:main}~(3) and as we will show in
a moment.
\qede\end{remark}

Since the numerator of~\eqref{eq:nprecis} is independent of $N$, 
Proposition~\ref{pro:shadco} implies the rationality of $\zeta_I(t)$.
This establishes Theorem~\ref{thm:main}, parts (1) and (2).

To prove part (3), let $f_0,\dots, f_r$ be homogeneous generators 
of $I$, let $d_i=\deg f_i$, and choose $n\ge r+1$. Then 
\[
\zeta_I(H) = \frac{[\prod_i (1+d_i H)\cdot \iota_{n*} s(Z_n,\Pbb^n)]_n}
{\prod_i (1+d_i H)}
\]
by~\eqref{eq:nprecis}. Let $X_i\subseteq \Pbb^n$ be the hypersurface
defined by the vanishing of $f_i$, so that $Z_n=X_0\cap\cdots \cap X_r$. 
The numerator
\begin{equation}\label{eq:numer}
\prod_i (1+d_i H)\cdot \iota_{n*} s(Z_n,\Pbb^n) \in A_*\Pbb^n
\end{equation}
is the push-forward to $\Pbb^n$ of the class used to define the intersection 
product $X_0\cdots X_r$ in~$\Pbb^n$, as recalled in~\S\ref{sec:1min}, by
means of the diagram
\[
\xymatrix{
Z_n \ar[r]^{\iota_n} \ar[d]_\delta & \Pbb^n \ar[d]^\Delta \\
X_0\times \cdots \times X_r \ar[r] & \Pbb^n\times\cdots \times \Pbb^n
}
\]
In particular, its term of degree $r+1$ in $H$ is $d_0\cdots d_r\, H^{r+1}$, 
by B\'ezout's theorem; therefore the coefficient of $t^{r+1}$ in~\eqref{eq:numer} 
equals $\prod_i d_i$, as claimed in Theorem~\ref{thm:main} (3).
The trailing term is discussed in~Remark~\ref{rem:less}.
Further, the class~\eqref{eq:numer} may also be written as follows.
Let $N=\delta^*(N_{X_0\times \cdots \times X_r} \Pbb^n\times \cdots
\times \Pbb^n$). Then the normal cone $C$ of $Z_n$ in $\Pbb^n$
may be embedded in $N$, and~\eqref{eq:numer} equals
\begin{equation}\label{eq:numer2}
\iota_{n*} q_*\left( c(\xi)\cap [\Pbb(C\oplus 1)]\right)
\end{equation}
where $\xi$ is the universal quotient bundle on $\Pbb(N\oplus 1)$ 
and $q$ is the projection from $\Pbb(N\oplus 1)$ to $Z_n$.
(This follows from the projection formula and the definition of
Segre class.) Since $\dim \Pbb(C\oplus 1) = n$ and $\rk \xi = r+1$,
the components of this class of codimension $>(r+1)$ in~$\Pbb^n$
necessarily vanish. This shows that~\eqref{eq:numer}
has degree $(r+1)$ as a polynomial in~$H$. 

The last remaining assertion in Theorem~\ref{thm:main}~(3)
is that the numerator has nonnegative coefficients, that is, that
\eqref{eq:numer} is effective. But note that
\[
N\cong \iota^* \left( \cO(d_0)\oplus \cdots \oplus \cO(d_n)\right)
\]
with $d_i>0$, hence it is generated by global section. As $\xi$ 
is a quotient of $q^*(N\oplus 1)$, it is also generated by global 
sections. It follows that \eqref{eq:numer2} is nonnegative, by
\cite[Example~12.1.7(a)]{85k:14004}, and this concludes the proof
of~Theorem~\ref{thm:main}.
\qed\smallskip

For clarity, we present here the skeleton of an algorithm computing
the Segre zeta function of an ideal, assuming that a basic
algorithm computing Segre classes of subschemes in projective
space is available. As illustrated in Example~\ref{ex:revisit},
this can also act as a bootstrap for current algorithms computing
Segre classes, improving
their performance quite substantially in some cases.
Assume $I$ is an ideal in $k[x_0,\dots, x_n]$, defined by 
homogeneous generators $f_i$.
\begin{itemize}
\item Test each $f_i$ to see if it is in the integral closure of the
other generators. If it is, remove it from the list and replace
$I$ by the ideal generated by the remaining elements. Repeat
until no such element is left.
\item Let $r+1$ be the number of generators left. If $n<r$, 
extend $I$ to ensure $n=r$. If $n>r$, replace $r-n$ variables $x_i$
with general linear combinations of the other $r$ variables,
again to obtain $r=n$, and restrict $I$.
\item Compute $\iota_* s(Z,\Pbb^r)$, where 
$Z\overset\iota\hookrightarrow \Pbb^r$ is the subscheme defined 
by $I$. Write it as a polynomial $S(t)$ of degree $\le r$. Let 
$N(t)=[S(t) \prod_i (1+ (\deg f_i)t)]_r$.
\item The Segre zeta function of the given ideal is then $\zeta_I(t) =
\dfrac{N(t) + (\prod_i \deg f_i) t^{r+1}} {\prod_i (1+ (\deg f_i)t)}$.
\end{itemize}
(Some of these steps are optional: for example, testing for integral 
dependence may be computationally demanding, and it can be
omitted.) Once the Segre zeta function
is known, extracting the information of $s(Z_N,\Pbb^N)$ is immediate
for {\em all\/} $N\ge 0$.

\section{Challenges and examples}\label{sec:cha}

There are several natural problems raised by the description 
of the Segre zeta function obtained in~\S\ref{sec:szf}.\medskip 
 
{\em (1) Determine the poles of $\zeta_I(t)$.\/}

Let $d_0\le \cdots \le d_r$ be the degree sequence of a minimal 
homogeneous reduction of $I$.
As observed in Corollary~\ref{cor:poles},
\begin{equation}\label{eq:mono}
\text{$-1/d$ is a pole of $\zeta_I(t)$} \implies
\text{$d$ is a number in this sequence.}
\end{equation}
However, examples show that $Q_I(t)$ does not necessarily equal 
$\prod_i (1+d_i t)$ (see~\S\ref{ss:mono}). It would be interesting to
examine the extent to which a converse of~\eqref{eq:mono} may 
hold.\smallskip

{\em (2) Describe a numerator of $\zeta_I(t)$ explicitly.\/}

By our main result, if $d_i$, $i=0,\dots,r$ are the degrees of any 
choice of generators for $I$ (or even of a reduction of $I$), then
$(\prod_i (1+d_i t))\,\zeta_I(t)$ is a
polynomial with nonnegative coefficients and leading term 
$d_0\cdots d_r \,t^{r+1}$. The fact that the coefficients are nonnegative
suggests that these may be expressed as dimensions of vector spaces
associated with $I$ (maybe ranks of suitable cohomology modules?),
or perhaps as volumes of polytopes determined by~$I$.\smallskip

{\em (3) Study the behavior of $\zeta_I(t)$ with respect to standard
ideal operations.\/}

For example, assume $I', I''\subseteq k[x_0,\dots, x_n]$ are 
{\em splayed;\/} for instance, we could assume that generators for $I'$ 
and $I''$ are polynomials in different sets of variables. Then
\[
\zeta_{I'+ I''}(t) = \zeta_{I'}(t)\cdot \zeta_{I''}(t)\quad.
\]
Indeed, it suffices to verify that in this case, for $N\gg 0$
\[
s(Z'_N\cap Z''_N,\Pbb^N)=s(Z'_N,\Pbb^N)\cdot s(Z''_N,\Pbb^N)
\]
where $Z'_N$, $Z''_N$ are the schemes defined by extensions
of $I'$, $I''$. Since the extensions are also splayed, this holds
by~\cite[Lemma~3.1]{MR3415650}.\smallskip

{\em (4) Compute projective invariants of a subvariety 
$Z\subseteq \Pbb^N$ in terms of the Segre zeta function of an 
ideal defining $Z$.}\smallskip

Concerning (2), we can describe a numerator for $\zeta_I(t)$ 
more explicitly in two situations, presented in~\S\ref{ss:ls} 
and~\S\ref{ss:mono}. We will illustrate~(4) in~\S\ref{ss:rks}
by computing the {\em ranks\/} of a nonsingular subvariety
$Z\subseteq \Pbb^n$ in terms of its Segre zeta function, and
we show how this implies the well-known fact that the dual
of a nonsingular complete intersection is a hypersurface.
In~\S\ref{sec:HC} we prove that, under certain hypotheses,
the Segre zeta function of a {\em local\/} complete intersection 
in projective space equals the Segre zeta function of a 
{\em global\/} complete intersection.

\subsection{Linear systems}\label{ss:ls}
Assume $Z$ is cut out by hypersurfaces from a fixed linear
system, i.e., the homogeneous generators $f_i$ of 
$I\subseteq k[x_0,\dots, x_n]$ have the same degree $d$.
A reduction of $I$ is then generated by $\le n+1$ general
linear combinations of the polynomials $f_i$.
(In fact, $n$ suffice in a neighborhood of~$Z$, \cite[\S3]{segre};
and the $(n+1)$-st guarantees that the linear combinations do 
not vanish elsewhere.) By Corollary~\ref{cor:ref}, 
\[
\zeta_I(t) = \frac{A(t)}{(1+dt)^{n+1}}
\]
for some polynomial $A(t)$ with nonnegative coefficients.
This argument recovers \cite[Theorem~4.3]{tensored}. 
Assuming that $k$ is algebraically closed, of characteristic $0$,
we obtained in~\cite[(15)]{tensored} the following description 
of $A(t)$:
\[
A(t)=a_0 (1+dH)^n + a_1 H(1+dH)^{n-1} + \cdots 
+ a_n H^n + d^{n+1} t^{n+1}\quad,
\]
where $a_i = d^i - N_i$, $N_i=$ the number of points of
intersection of $i$ general hypersurfaces in the linear system
and $n-i$ general hyperplanes (\cite[Theorem~1.2]{tensored}).

This observation can form the basis of an algorithm computing
the Segre zeta function in this case. However, the saturation 
needed to compute the numbers $N_i$ is computationally
expensive; and it may be argued that one of the main reasons
to compute Segre classes is precisely in order to solve problems
such as determining the numbers $N_i$, so this approach
swims against the stream.
On the other hand, we do not know of any such concrete interpretation
for the numerator of a Segre zeta function in general.

\subsection{Monomial ideals}\label{ss:mono}
Rather than giving the most comprehensive statement, we illustrate
this case with an example. Let
\[
I=(x^7,x^5y,x^4y^2,x^3y^4,x^2y^5,xy^7)\subseteq k[x,y]\quad,
\]
an ideal generated by monomials. The exponent vectors
\[
(7,0)\,,\, (5,1)\,,\, (4,2)\,,\, (3,4)\,,\, (2,5)\,,\, (1,7)
\]
determine a region in the plane, namely the complement in
the positive quadrant of the convex hull of the corresponding
translates of the positive quadrant.
\begin{center}
\includegraphics[scale=.4]{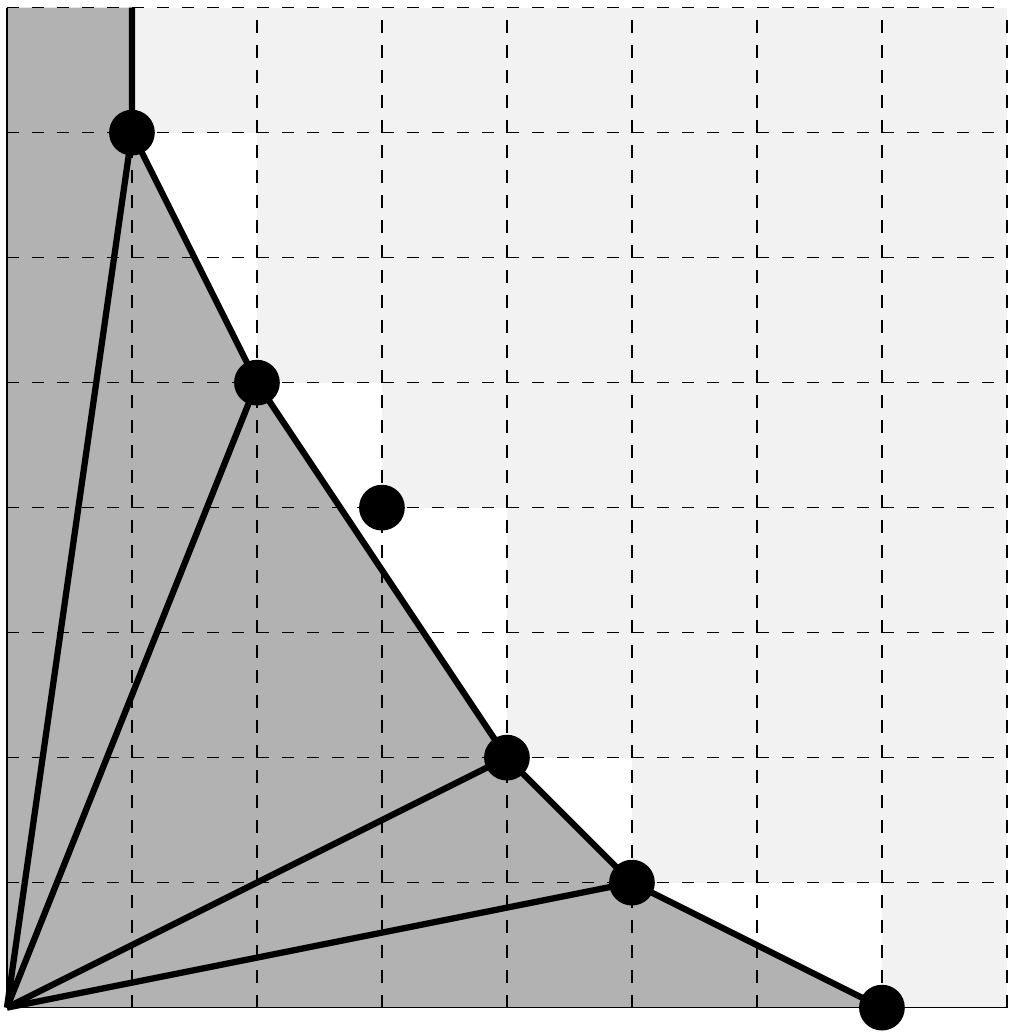}
\end{center}
This region can be split as a union of triangles, including one
`infinite' triangle, as shown in the picture. With each triangle $T$
with vertices $(0,0),(a_1,a_2),(b_1,b_2)$
we associate a rational function:
\[
\frac{\Vol(T)\, t^2}{(1+(a_1+a_2)t)(1+(b_1+b_2)t)}
\]
where $\Vol(T) = |a_1 b_2-a_2 b_1|$ is the normalized volume 
of the triangle. With an infinite triangle with vertices at $(0,0)$,
$(a_1,a_2)$, and the $y$ direction, we associate the rational
function
\[
\frac{\Vol(T)\, t}{(1+(a_1+a_2)t)}
\]
where now $\Vol(T)=a_1$ is the normalized volume of the projection
of the triangle to the $x$-axis. Adding these contributions in the
example shown above, we get
{\small
\[
\frac{t}{1+8t}+\frac{9t^2}{(1+7t)(1+8t)}+\frac{16t^2}{(1+6t)(1+7t)}
+\frac{6t^2}{(1+6t)^2}+\frac{7t^2}{(1+6t)(1+7t)} 
=\frac{t+57t^2+640t^3+2016t^4}{(1+6t)^2(1+7t)(1+8t)}
\]
}

\begin{claim}
\[
\zeta_I(t) = \frac{t+57t^2+640t^3+2016t^4}{(1+6t)^2(1+7t)(1+8t)}
=t+30\, t^2-442\, t^3+4578\, t^4-\cdots
\]
\end{claim}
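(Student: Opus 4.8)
The plan is to obtain $\zeta_I(t)$ by applying the combinatorial computation of Segre classes of monomial schemes from~\cite{MR3070865} and~\cite{Scaiop} to the fan triangulation of the Newton region pictured above. First I would observe that for every $N\ge n$ the extension $I_N$ is generated by the same six monomials in $x_0=x$ and $x_1=y$, so that $Z_N\subseteq\Pbb^N$ is again a monomial scheme whose Newton region $R$---the complement in the positive quadrant of the convex hull $P$ of the translated quadrants---is independent of $N$. The exponent $(3,4)$ lies strictly above the segment joining $(4,2)$ and $(2,5)$, hence in the interior of $P$, so the generator $x^3y^4$ is redundant and changes neither $P$ nor $R$. (Equivalently $x^3y^4\in\overline{(x^4y^2,x^2y^5)}$, so this also follows from Proposition~\ref{pro:redux}.) The vertices of $P$ facing the origin are thus $(7,0),(5,1),(4,2),(2,5),(1,7)$, and the triangulation of $R$ with apex at the origin consists of the four bounded triangles on consecutive pairs of these vertices together with the one unbounded triangle spanned by the origin, $(1,7)$, and the positive $y$-direction.

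Next I would invoke the monomial Segre class formula, which expresses $\iota_{N*}s(Z_N,\Pbb^N)$, in the cohomological bookkeeping of this section, as a sum over the simplices of the triangulation in which the contribution of each simplex is the truncation to degree $N$ of a fixed rational function in $t$. Concretely, a bounded triangle with vertices $0,(a_1,a_2),(b_1,b_2)$ contributes $\dfrac{\Vol(T)\,t^2}{(1+(a_1+a_2)t)(1+(b_1+b_2)t)}$ with $\Vol(T)=|a_1b_2-a_2b_1|$, while the unbounded triangle contributes $\dfrac{\Vol(T)\,t}{1+(a_1+a_2)t}$ with $\Vol(T)=a_1$. Since each summand is a single rational function independent of $N$, their sum is forced to agree with the well-defined series $\zeta_I(t)$ of Lemma~\ref{lem:wd}.

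Evaluating on the present data, the four bounded triangles give volumes $7,6,16,9$ (the determinants $|7\cdot1-0\cdot5|$, $|5\cdot2-1\cdot4|$, $|4\cdot5-2\cdot2|$, $|2\cdot7-5\cdot1|$) with degree pairs $(7,6),(6,6),(6,7),(7,8)$, and the unbounded triangle gives volume $1$ with degree $8$. These are exactly the five summands displayed before the Claim; clearing denominators over $(1+6t)^2(1+7t)(1+8t)$ and collecting terms---a routine calculation---produces the numerator $t+57t^2+640t^3+2016t^4$, and expanding the resulting rational function yields $t+30t^2-442t^3+4578t^4-\cdots$.

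The hard part is the second step: justifying that each simplex contributes precisely the stated rational function. This is the substance of the monomial theory and rests on a local toric computation at the coordinate points and coordinate subspaces of $\Pbb^N$, where the normalized volume of each triangle records the relevant length/intersection data. For the purposes of this example I would cite the precise statements of~\cite{MR3070865} and~\cite{Scaiop}; a self-contained derivation would instead resolve $Z_N$ by the toric blow-up attached to the triangulation and apply Lemma~\ref{lem:sfroms} one fan cone at a time.
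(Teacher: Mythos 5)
Your proposal is correct and follows essentially the same route as the paper: the paper's proof of the Claim is precisely the appeal to \cite[Theorem~1.1]{Scaiop} (with the triangulation discussion in \S2.2 of that reference), combined with the per-triangle contributions and the algebra that the paper sets up in the text immediately preceding the Claim, all of which your volumes, degree pairs, and expansion reproduce exactly (including the redundancy of $x^3y^4$, which the paper notes via Proposition~\ref{pro:redux} right after the Claim).
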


This follows from~\cite[Theorem~1.1]{Scaiop} (see~\S2.2 in the
same reference for a discussion of the role of triangulations). 
The result of~\cite{Scaiop} shows that the same strategy may be
used to compute the Segre zeta function of any ideal in
$k[x_0,\dots, x_n]$ that is monomial with respect to a sequence
of homogeneous polynomials satisfying a weak transversality
condition. We refer the reader to~\cite{Scaiop} more details.

This example illustrate several interesting points.\smallskip 

$\bullet$ The monomial $x^3y^4$ does not affect the computation,
since the corresponding vertex $(3,4)$ is in the convex hull of
the quadrant translates determined by the other exponent vectors.
Thus it is clear from this computation that $\zeta_I(t)=\zeta_{I'}(t)$,
where
\[
I' = (x^7,x^5y,x^4y^2,x^2y^5,xy^7)
\]
is obtained by omitting the generator $x^3y^4$. This is a manifestation
of Proposition~\ref{pro:redux}: indeed, $x^3 y^4$ is integral over $I'$.
\smallskip

$\bullet$ In fact, $I=\overline I'$ \cite[Proposition~1.4.6]{MR2266432};
$I'$ is a minimal reduction of $I$, and its degree sequence
is $6\le 6\le 7\le 7\le 8$. The degree~$7$ appears twice in this sequence,
yet $-1/7$ is a simple pole of $\zeta_I(t)$.
So this is an example in which $Q_I(t)$ does
not equal the polynomial corresponding to the degree sequence of
a homogeneous minimal reduction.\smallskip

$\bullet$ The numerator $t+57t^2+640t^3+2016t^4$ has nonnegative
coefficients as prescribed in general by Theorem~\ref{thm:main}~(3).
In this case, the nonnegativity is further explained by the fact that
this polynomial is a combination of factors of the denominator, which
are products of terms $(1+d_i t)$, with coefficients given by 
{\em volumes\/} of certain simplices. 
\smallskip

It is tempting to guess that numerators of Segre zeta functions may
always be expressed in terms of volumes of certain polytopes in
Euclidean space, in analogy with what we have just verified in the 
monomial case.

\subsection{Ranks of a nonsingular projective variety}\label{ss:rks}
Let $\iota: Z\subsetneq\Pbb^n$ be a nonsingular projective variety. The
projective conormal variety $\Pbb(N^\vee_Z\Pbb^n)$ determines a
class of dimension $n-1$ in $\Pbb^n\times {\Pbb^n}^\vee$:
\[
[\Pbb(N^\vee_Z\Pbb^n)]=\delta_0(Z) H^n h + \delta_1(Z) H^{n-1} h^2 +
\cdots + \delta_m(Z) H^{n-m} h^{m+1} 
\]
where $H$, resp., $h$ are the pull-backs of the hyperplane classes
from $\Pbb^n$, resp., ${\Pbb^n}^\vee$, and $m=\dim Z$. (It is easy to see 
that the other terms in the decomposition of $[\Pbb(N^\vee_Z\Pbb^n)]$ vanish.)
The integers $\delta_i(Z)$ are the classical {\em ranks,\/} or {\em polar 
degrees\/} of $Z$, and have compelling geometric interpretations.
For example, $\delta_m(Z)$ is 
the degree of $Z$, while (in characteristic~$0$) the first nonvanishing 
$\delta_i(Z)$ is the degree of the dual variety $Z^\vee$ of $Z$, and the 
dimension of the dual is $n-1-i$ for the corresponding index~$i$. 
(See e.g., \cite[p.~152]{MR555696}, \cite[Theorem~1.1]{MR1861428}.)
It follows that $Z^\vee$ is a hypersurface if and only if 
$\delta_0(Z)\ne 0$.

\begin{prop}\label{prop:dual}
Let $Z\subsetneq\Pbb^n$ be a nonsingular subvariety of dimension~$m$, 
and let $I$ be a homogeneous ideal defining $Z$ in $\Pbb^n$. Then, with 
notation as above,
\begin{equation}\label{eq:ranks}
\zeta_I\left(-\frac t{1+t}\right) = (-1)^{n-m}\left(\delta_m(Z) t^{n-m} + \cdots 
+ \delta_0(Z) t^n + \text{higher order terms}\right)\,.
\end{equation}
\end{prop}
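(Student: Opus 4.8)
The plan is to relate the Segre zeta function of $Z$ to its Chern-Fulton/Mather class, or more directly to the polar degrees encoded in the projective conormal variety, via the standard dictionary between Segre classes and the numerical invariants $\delta_i(Z)$. Since $Z$ is nonsingular, I would first recall the formula expressing $\iota_* s(Z,\Pbb^n)$ in terms of the Chern class of $Z$ and the Chern class of the restriction $\iota^*\cO(1)$; in fact, for a regular embedding the Segre class is $s(Z,\Pbb^n) = c(N_Z\Pbb^n)^{-1}\cap[Z]$, and pushing forward gives a polynomial in $H$ whose coefficients encode $c(TZ)$ and $c(T\Pbb^n|_Z)$. The invariants $\delta_i(Z)$, by contrast, are classically expressed through the Chern classes of $Z$ as well: the standard formula (see e.g.\ Fulton or the references cited above) writes each polar degree $\delta_i(Z)$ as an explicit integral of a Chern class product over $Z$.

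The key computation is then to substitute $t\mapsto -t/(1+t)$ into the rational function $\zeta_I(t)$ and match the resulting coefficients against these Chern-class expressions. I would use the coordinate-change interpretation: the operation $s\mapsto c\cap s$ relating Segre and Chern data of the conormal variety corresponds precisely to such a Möbius-type substitution in the generating variable, because $\Pbb(N^\vee_Z\Pbb^n)$ and $\Pbb(N_Z\Pbb^n)$ are dual projective bundles and the classes $H$ and $h$ play symmetric roles. Concretely, the hyperplane class $h$ on ${\Pbb^n}^\vee$ corresponds, under the involution sending $\cO(1)$ to its relation with the tautological sub/quotient, to the combination $H/(1+H)$ or its inverse; tracking this yields the substitution $-t/(1+t)$. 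I would verify the sign $(-1)^{n-m}$ arises from the codimension $n-m$ of $Z$ together with the alternation inherent in passing from Segre to Chern data.

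The main obstacle will be assembling the precise identity relating the push-forward of $[\Pbb(N^\vee_Z\Pbb^n)]$ to $\iota_*s(Z,\Pbb^n)$ in a way that makes the Möbius substitution transparent. The cleanest route is probably to start from the known expression for the conormal class in terms of Segre classes of $Z$ in $\Pbb^n$ (the class $[\Pbb(N^\vee_Z\Pbb^n)]$ can be computed as a pushforward from the projectivized conormal bundle, and its bidegree coefficients $\delta_i(Z)$ are polynomials in the Segre coefficients $a_j$ of $\zeta_I$), and then to invert this relationship. Since the $\delta_i$ depend only on $Z$ and not on the choice of defining ideal $I$, while $\zeta_I$ is determined by $\iota_*s(Z,\Pbb^n)$ for $n$ large via Theorem~\ref{thm:main}, the dependence on $I$ drops out of both sides, so it suffices to check the identity at the level of the intrinsic Segre class. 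The remaining work is bookkeeping: expand $\zeta_I(-t/(1+t)) = \sum_i a_i (-t/(1+t))^i$, collect the coefficient of each power $t^k$ as a finite alternating sum of the $a_i$ weighted by binomial coefficients, and identify this combination with the formula for $\delta_{n-k}(Z)$. I expect the lowest-order terms (starting in degree $n-m$, matching $\codim Z$) to anchor the comparison, with $\delta_m(Z)=\deg Z = a_{n-m}$ providing the base case and the higher $\delta_i$ following by the same Chern-class dictionary.
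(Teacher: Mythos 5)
Your proposal has the right skeleton, and it matches the paper's own proof in outline: use $s(Z,\Pbb^n)=c(N_Z\Pbb^n)^{-1}\cap[Z]$ for nonsingular $Z$, express the polar degrees through conormal data, and account for a sign and a M\"obius substitution. But the two steps that carry the actual content of the proposition are left unproved and, where you do sketch them, misattributed. First, you never pin down \emph{which} classical formula for $\delta_i(Z)$ you will match against. The one that makes the argument work is Holme's formula $\delta_i(Z)=\int H^i\cdot s(N^\vee_Z\Pbb^n(H))$ (see (1.4) in \cite{MR1861428}): the polar degrees are the coefficients of the push-forward of the Segre class of the conormal bundle \emph{twisted by} $\cO(1)$. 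The twist is essential and never appears in your outline. Second, the substitution $t\mapsto -t/(1+t)$ does not come from a symmetry between $H$ and $h$ on ``dual projective bundles'' $\Pbb(N^\vee)$ and $\Pbb(N)$, as you suggest --- your hedge between ``$H/(1+H)$ or its inverse'' signals that the mechanism is not identified. It factors into two independent steps: the sign comes from dualizing the bundle, $s_i(N^\vee)=(-1)^i s_i(N)$, which combined with the fact that $\iota_* s(Z,\Pbb^n)$ starts in codimension $n-m$ yields exactly the prefactor $(-1)^{n-m}$ on $\zeta_I(-H)$; and $t\mapsto t/(1+t)$ comes from the twist by $\cO(H)$, via $\iota_*(s(N^\vee_Z\Pbb^n(H))\cap[Z])=\iota_*(s(N^\vee_Z\Pbb^n)\cap[Z])\otimes_{\Pbb^n}\cO(H)$ (Proposition~1 of \cite{MR96d:14004}), since the operation $\otimes_{\Pbb^n}\cO(H)$ acts on classes in $\Pbb^n$ precisely by replacing $H$ with $H/(1+H)$.

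Deferring these to ``bookkeeping'' is not harmless: matching the coefficient of $t^{n-m+k}$ in $\zeta_I(-t/(1+t))$ (a binomial combination of $a_{n-m},\dots,a_{n-m+k}$) against $\delta_{m-k}(Z)$ \emph{is} the proposition, and without the twisted-conormal formula for the $\delta_i$ and the tensoring lemma there is nothing concrete to match it to; completing your plan would amount to re-deriving both. What is sound in your write-up, besides the overall shape: the reduction to the intrinsic class (the coefficient of $t^k$ in $\zeta_I(-t/(1+t))$ for $k\le n$ involves only $a_j$ with $j\le k$, which are determined by $\iota_* s(Z,\Pbb^n)$, so the choice of $I$ is irrelevant in the displayed range), and the anchor $\delta_m(Z)=\deg Z=a_{n-m}$, which correctly produces the leading term $(-1)^{n-m}\delta_m(Z)\,t^{n-m}$.
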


\begin{proof}
The definition of ranks implies that
\[
\delta_i(Z) = \int H^i \cdot s(N^\vee_Z\Pbb^n(H))\quad,
\]
see e.g., \cite[(1.4)]{MR1861428} (but note the different convention for Segre classes
used in this reference). Therefore,
\[
\iota_* (s(N^\vee_Z\Pbb^n(H))\cap [Z]) = \left( \delta_m(Z) H^{n-m} + \cdots 
+ \delta_0(Z) H^n \right) \cap [\Pbb^n]
\]
On the other hand, since $Z$ is nonsingular,
\[
\iota_* (s(N_Z\Pbb^n)\cap [Z]) = \iota_* s(Z,\Pbb^n)
=\zeta_I(H)\cap [\Pbb^n]
\]
in $A_*\Pbb^n$. Taking a dual amounts to changing the
sign of terms of every other codimension in the corresponding Segre class, hence
\[
\iota_* (s(N^\vee_Z\Pbb^n)\cap [Z]) = (-1)^{n-m} \zeta_I(-H)\cap [\Pbb^n]\quad.
\]
By~\cite[Proposition~1]{MR96d:14004},
\[
\iota_* (s(N^\vee_Z\Pbb^n\otimes \cO(H))\cap [Z]) 
= \iota_* (s(N^\vee_Z\Pbb^n)\cap [Z]) \otimes_{\Pbb^n} \cO(H)\quad,
\]
and hence
\[
\iota_* (s(N^\vee_Z\Pbb^n)\cap [Z]) \otimes_{\Pbb^n} \cO(H)
=(-1)^{n-m} \zeta_I\left(-\frac H{1+H}\right)\cap [\Pbb^n]
\]
since the effect of the operation $-\otimes_{\Pbb^n} \cO(H)$ on a class in 
$\Pbb^n$ is to replace $H$ by $H/(1+H)$. The statement follows. 
\end{proof}

\begin{example}
The twisted cubic $C$ in $\Pbb^3$ has degree~$3$ and admits an ideal $I$ 
generated by three quadrics. By Theorem~\ref{thm:main} (3) 
(and Remark~\ref{rem:less}), this is the only information needed to compute 
its Segre zeta function:
\[
\zeta_I(t) = \frac{3t^2+8t^3}{(1+2t)^3}\quad.
\]
By Proposition~\ref{prop:dual}, since
\[
(-1)^2 \zeta_I\left(-\frac{t}{1+t}\right) = \frac{3t^2-5t^3}{(1-t)^3} = 3t^2+4t^3+3t^4-5t^6-\cdots
\] 
we see $\delta_1=3,\delta_0=4$, and we conclude that the dual of $C$ is a
quartic surface.
\qede\end{example}

\begin{example}\label{exa:CI}
If $Z$ is a complete intersection of hypersurfaces of degrees $d_1,\dots, d_r$, then
\[
\zeta_I(t) = \frac{d_1\cdots d_r \, t^r}{(1+d_1 t)\cdots (1+d_r t)}
\]
where $I$ is the homogeneous ideal of $Z$ (Example~\ref{ex:compint}). 
By Proposition~\ref{prop:dual}, if $Z$ is nonsingular, then the ranks of $Z$ 
are given by the first several coefficients in
\[
(-1)^r \zeta_I\left(-\frac t{1+t}\right) = \prod_{i=1}^r \frac{d_i \frac{t_i}{1+t_i}}
{1-\frac{d_i t}{1+t_i}}=\prod_{i=1}^r \frac{d_i t_i}{1-(d_i-1) t}
=\prod_{i=1}^r d_i \sum_{j\ge 0} (d_i-1)^j t^{j+1}\quad.
\]
If $Z$ is not a linear subspace, i.e., some $d_i$ is greater than $1$, then 
{\em all\/} coefficients of $t^i$, $i\ge r$ in this series are positive. 
In particular, $\delta_0>0$. this verifies the well-known fact that the dual 
of a nonsingular complete intersection is necessarily a hypersurface. 
\qede\end{example}

The terms of higher order in $\zeta_I(t)$ or in the series at~\eqref{eq:ranks}
depend on the specific ideal chosen to cut out the subvariety, and reflect the
scheme structure deposited on the vertices of the corresponding cones in 
higher dimension. Since these cones are singular, the interpretation of these
higher-order coefficients as `ranks' is no longer valid. However, ranks may 
be defined for singular varieties, and are determined by the {\em Chern-Mather
class\/} of the variety (see~\cite{10,MR1074588,mather}). It is therefore 
natural to expect that these higher order terms record some `Chern-Mather'
information, and it would be interesting to obtain precise results of this type.

\subsection{Local complete intersections of small codimension in projective space}\label{sec:HC}
The Segre zeta function yields a transparent way to verify that, under suitable 
hypotheses, local complete intersections in projective space are actually
{\em global\/} complete intersections. Such results are motivated by 
Hartshorne's influential conjecture: {\em If $Z$ is a nonsingular 
subvariety of codimension $r$ of $\Pbb^n$, and if $r<\frac 13n$, then $Z$ 
is a complete intersection,\/} \cite{MR0384816}. To date, Hartshorne's conjecture
is still open even for subvarieties of codimension~$2$ in~$\Pbb^n$.

\begin{prop}\label{prop:Cc}
Let $\iota:Z\hookrightarrow \Pbb^n$ be a local complete intersection of codimension~$r$,
and assume that the Chern class of the normal bundle of $Z$ is the pull-back of a class 
from the ambient projective space $\Pbb^n$. Assume $Z$ is defined scheme theoretically
by a homogeneous ideal $I$ generated by $\le \dim Z$ forms. Then the Segre zeta 
function of $Z$ is the Segre zeta function of a complete intersection.
\end{prop}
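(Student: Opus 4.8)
The goal is to show that under the stated hypotheses, $\zeta_I(t)$ coincides with the Segre zeta function of a complete intersection; by Example~\ref{ex:compint} this means producing degrees $e_1,\dots,e_r$ such that
\[
\zeta_I(t) = \frac{e_1\cdots e_r\, t^r}{(1+e_1 t)\cdots (1+e_r t)}\quad.
\]
My plan is to compute $\zeta_I(t)$ directly from the Segre class $s(Z,\Pbb^n)$, exploiting the two key hypotheses: that $Z$ is a local complete intersection (so its normal bundle $N_Z\Pbb^n$ is a genuine vector bundle of rank $r$, and $s(Z,\Pbb^n) = c(N_Z\Pbb^n)^{-1}\cap[Z]$ just as in the smooth case), and that $c(N_Z\Pbb^n)$ is pulled back from $\Pbb^n$. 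The first step is to write $c(N_Z\Pbb^n) = \iota^*(1+c_1 H)(1+c_2 H)\cdots$ for integers coming from the pulled-back class; I expect the l.c.i.\ plus pull-back hypotheses to force the total Chern class of the normal bundle to factor as $\iota^*\prod_{i=1}^r(1+e_i H)$ for suitable positive integers $e_i$, essentially because a class pulled back from $\Pbb^n$ is a polynomial in $H$ and a rank-$r$ bundle's Chern class is determined by $r$ such Chern roots. This is the crucial structural input.

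Granting the factorization, I would compute
\[
\iota_* s(Z,\Pbb^n) = \iota_*\!\left(\frac{[Z]}{\prod_i(1+e_i H)}\right)
= \frac{\iota_*[Z]}{\prod_i(1+e_i H)}\quad,
\]
using the projection formula and the fact that $\prod_i(1+e_i H)$ is pulled back from $\Pbb^n$. Since $Z$ has codimension $r$ and degree $\deg Z$, one has $\iota_*[Z] = (\deg Z)\,H^r\cap[\Pbb^n]$ up to the leading term; the point is that the defining data give $\deg Z = e_1\cdots e_r$ by the B\'ezout-type count appropriate to the complete-intersection comparison. Then by Lemma~\ref{lem:wd} and the truncation description at the start of~\S\ref{sec:szf}, $\zeta_I(H)$ agrees with $\iota_* s(Z,\Pbb^n)$ in the relevant range, giving $\zeta_I(t) = (e_1\cdots e_r)t^r/\prod_i(1+e_i t)$, which is exactly the Segre zeta function of a complete intersection of multidegree $(e_1,\dots,e_r)$.

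The role of the remaining hypothesis, that $I$ is generated by at most $\dim Z = n-r$ forms, is to control the numerator via Theorem~\ref{thm:main}~(3) and Corollary~\ref{cor:ngr}. If $I$ is generated by $s+1 \le n-r$ forms of degrees $d_0,\dots,d_s$, then $\zeta_I(t)$ has denominator dividing $\prod(1+d_i t)$ and numerator of degree $s+1$; combining this a~priori shape with the computed expression $\prod_i(1+e_i t)$ in the denominator should force the generator degrees and the Chern-root integers $e_i$ to match up, collapsing the numerator to the bare leading term $e_1\cdots e_r\,t^r$. I would use the rationality and the nonnegativity/degree constraints to pin down the numerator: knowing $\zeta_I(t)$ is rational with the computed denominator and that its numerator polynomial has the forced degree, the fact that the Segre class of an l.c.i.\ is $[Z]/\prod(1+e_i H)$ with $[Z]$ concentrated in a single codimension leaves no room for lower-order numerator terms.

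The main obstacle I anticipate is the first step: rigorously deducing that a rank-$r$ normal bundle whose total Chern class is pulled back from $\Pbb^n$ must have Chern class of the split form $\prod_{i=1}^r(1+e_i H)$ with integer $e_i$, and that the product of these $e_i$ equals $\deg Z$. A pulled-back total Chern class is automatically a polynomial in $H$ with integer coefficients, but factoring it into exactly $r$ positive integral linear factors $(1+e_i H)$ is not automatic from rank and integrality alone; I would need to argue that the l.c.i.\ hypothesis together with the codimension and the generator-count bound force precisely this splitting, likely by comparing with the Chern class a genuine complete intersection of the same codimension and degree would have, and invoking the constraints from Theorem~\ref{thm:main}~(3) to rule out any discrepancy. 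Verifying this compatibility, rather than the subsequent formal Segre-class manipulation, is where the real work lies.
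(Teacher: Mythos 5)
Your proposal stalls exactly where you say it does, and the gap is genuine: nothing in the hypotheses lets you factor $c(N_Z\Pbb^n)=\iota^*(1+c_1H+\cdots+c_rH^r)$ into integral linear factors $\prod_{i=1}^r(1+e_iH)$ \emph{before} the Segre-class comparison is made. A class pulled back from $\Pbb^n$ is just a truncated integer polynomial in $H$ (a priori it could be, say, $1+H+H^2$), and having rank $r$ gives Chern roots only in an extension ring, not integral multiples of $H$. Your whole computation is prefaced by ``granting the factorization,'' and your sketch for obtaining it (``comparing with the Chern class a genuine complete intersection of the same codimension and degree would have'') is a restatement of the desired conclusion, not an argument. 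Moreover, even granting the factorization, your final step is shaky: the Segre-class identity only determines $\zeta_I(H)$ modulo $H^{n+1}$, so ``no room for lower-order numerator terms'' does not follow from the shape of $[Z]/\prod(1+e_iH)$ alone; truncated identities do not automatically propagate to identities of power series.

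The actual proof reverses the logic: the factorization is the \emph{output} of the comparison, not its input, and both of your missing steps are settled by one polynomial identity. Write $c(N_Z\Pbb^n)=\iota^*(1+c_1H+\cdots+c_rH^r)$ with no factorization assumed. The lci hypothesis and the projection formula give $\iota_*s(Z,\Pbb^n)=(1+c_1H+\cdots+c_rH^r)^{-1}(\deg Z)H^r\cap[\Pbb^n]$, while Theorem~\ref{thm:main} gives $\iota_*s(Z,\Pbb^n)=\zeta_I(H)\cap[\Pbb^n]$ with $\zeta_I(H)=P(H)/\prod_{i=1}^m(1+d_iH)$, where $P$ has trailing term $(\deg Z)H^r$ and leading term $d_1\cdots d_mH^m$. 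Cross-multiplying yields
\[
(\deg Z)\,H^r\prod_{i=1}^m(1+d_iH)\;=\;(1+c_1H+\cdots+c_rH^r)\,P(H)
\]
in $A_*\Pbb^n\cong\Zbb[H]/(H^{n+1})$. Here is where the generator bound $m\le\dim Z$ earns its keep (in your write-up it only ``controls the numerator''): both sides are polynomials of degree $r+m\le r+\dim Z=n$, so the congruence mod $H^{n+1}$ is an honest identity in $\Zbb[H]$. Unique factorization in $\Zbb[H]$ then forces, after reordering the generators, $1+c_1H+\cdots+c_rH^r=(1+d_1H)\cdots(1+d_rH)$ and $P(H)=(\deg Z)H^r(1+d_{r+1}H)\cdots(1+d_mH)$; in particular the $e_i$ you hoped for exist and are generator degrees, $\deg Z=d_1\cdots d_r$, the extra factors cancel, and $\zeta_I(t)=d_1\cdots d_r\,t^r/\prod_{i=1}^r(1+d_it)$ as in Example~\ref{ex:compint}. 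Without lifting to $\Zbb[H]$, neither the splitting nor the collapse of the numerator can be justified.
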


We will in fact prove that if $Z=X_1\cap \cdots\cap X_m$, 
with $X_i$ hypersurfaces and $m\le \dim Z$, then (under the other hypotheses
in the statement of the proposition) $c(N_Z\Pbb^n)=\iota^* \prod_{i=1}^r 
c(N_{X_i}\Pbb^n)$ after a reordering of the hypersurfaces.

\begin{proof}
Let $I=(F_1,\dots, F_m)$, with $F_i$ homogeneous, and let $d_i=\deg F_i$.
We let $N_Z\Pbb^n$ be the normal bundle of $Z$ in $\Pbb^n$; this is a bundle 
of rank $r=\codim_Z\Pbb^n$. By hypothesis, 
\[
c(N_Z\Pbb^n) = \iota^*(1+c_1 H+ \cdots + c_r H^r)
\]
for some integers $c_1,\dots, c_r$, where $H$ is the hyperplane class in 
$\Pbb^n$. The Segre class of $Z$ in~$\Pbb^n$ is given by
\[
s(Z,\Pbb^n) = c(N_Z\Pbb^n)^{-1}\cap [Z]\quad,
\]
hence by the projection formula it pushes forward in $\Pbb^n$ to
\[
\iota_* s(Z,\Pbb^n) = \iota_*(\iota^*(1+c_1 H+ \cdots + c_r H^r)^{-1}\cap [Z])
=(1+c_1 H+\cdots + c_r H^r)^{-1}\cap (\deg Z) H^r\cap [\Pbb^n]\quad.
\]
On the other hand, 
\[
\iota_* s(Z,\Pbb^n) = \zeta_I(H)\cap [\Pbb^n]\quad,
\]
where $\zeta_I(H)$ is the Segre zeta function determined by $I$.
By the main theorem,
\[
\zeta_I(H) = \frac{P(H)}{(1+d_1 H)\cdots (1+d_m H)}
\]
where $P(H) = (\deg Z) H^r + \cdots + d_1\cdots d_m H^m$. (The
coefficient of $H^r$ equals $\deg Z$ because $Z$ is a local complete
intersection.) Therefore, we have
the equality of rational equivalence classes in $\Pbb^n$:
\[
(1+c_1 H+\cdots + c_r H^e)^{-1}\cap (\deg Z) H^r\cap [\Pbb^n]
=\frac{P(H)}{(1+d_1 H)\cdots (1+d_m H)}\cap [\Pbb^n]\quad,
\]
and hence
\begin{multline*}
(\deg Z)(1+d_1 H)\cdots (1+d_m H) H^r \cap [\Pbb^n] \\
= (1+c_1 H+\cdots + c_r H^r)
((\deg Z) H^r + \cdots + d_1\cdots d_m H^m)\cap [\Pbb^n]
\end{multline*}
in $A_*\Pbb^n\cong \Zbb[H]/(H^{n+1})$. Both sides of this identity are 
polynomials of degree $=r+m\le r+\dim Z = n$ by hypothesis.
It follows that the polynomials must be equal {\em in $\Zbb[H]$:\/}
\[
(\deg Z)(1+d_1 H)\cdots (1+d_m H) H^r = (1+c_1 H+\cdots + c_r H^r)
((\deg Z) H^r + \cdots + d_1\cdots d_m H^m)\quad.
\]
By unique factorization, we can conclude that
\[
(1+c_1 H+\cdots + c_r H^r) = (1+d_1 H)\cdots (1+d_r H)
\]
and
\[
(\deg Z) H^r + \cdots + d_1\cdots d_m H^m = (\deg Z)(1+d_{r+1} H)\cdots (1+d_m H) H^e
\]
after reordering the factors $(1+d_i H)$. Therefore, $\deg Z = d_1\cdots d_r$
and the Chern class of the normal bundle $N_Z\Pbb^n$ agrees with the
Chern class of a complete intersection of hypersurfaces of degrees $d_1,\dots, d_r$.
The result follows.
\end{proof}

By what we verified in Example~\ref{exa:CI} we can draw the following consequence.

\begin{corol}\label{cor:dd}
Let $Z\subseteq \Pbb^n$ be a nonsingular subvariety satisfying
the hypotheses of Proposition~\ref{prop:Cc}. Then the dual of $Z$ is a hypersurface.
\end{corol}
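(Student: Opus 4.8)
The plan is to deduce the corollary directly from Proposition~\ref{prop:Cc} together with the complete-intersection computation of Example~\ref{exa:CI}, exploiting the fact that the polar degrees of a nonsingular variety are read off from its Segre zeta function via Proposition~\ref{prop:dual}. First I would apply Proposition~\ref{prop:Cc}, which under the stated hypotheses produces positive integers $d_1,\dots,d_r$, with $r=\codim Z$ and $\prod_i d_i=\deg Z$, such that $\zeta_I(t)$ coincides with the Segre zeta function of a complete intersection of hypersurfaces of degrees $d_1,\dots,d_r$; explicitly, by Example~\ref{ex:compint},
\[
\zeta_I(t)=\frac{d_1\cdots d_r\,t^r}{(1+d_1 t)\cdots(1+d_r t)}\quad.
\]

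Next I would recall from \S\ref{ss:rks} that $Z^\vee$ is a hypersurface exactly when the top polar degree $\delta_0(Z)$ is nonzero. Since $Z$ is nonsingular, Proposition~\ref{prop:dual} applies and expresses each $\delta_i(Z)$ as a coefficient of the fixed transform $\zeta_I\!\left(-t/(1+t)\right)$; in particular $\delta_0(Z)$ equals, up to the sign $(-1)^{n-m}=(-1)^r$, the coefficient of $t^n$ in that series. Because the polar degrees thus depend only on $\zeta_I$, and $\zeta_I$ is the complete-intersection zeta function above, I may compute them from the series already evaluated in Example~\ref{exa:CI}:
\[
(-1)^r\,\zeta_I\!\left(-\frac{t}{1+t}\right)=\prod_{i=1}^r\frac{d_i\,t}{1-(d_i-1)t}\quad.
\]

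Finally I would extract the required positivity. Provided $Z$ is not a linear subspace, so that some $d_i>1$, the corresponding factor $\frac{d_i t}{1-(d_i-1)t}$ expands with strictly positive coefficients in every degree $\ge 1$, while each remaining factor contributes a term of order exactly $t$; hence the product has a strictly positive coefficient in every degree $\ge r$. As $n=m+r\ge r$, the coefficient of $t^n$ is positive, whence $\delta_0(Z)>0$ and $Z^\vee$ is a hypersurface. The only obstacle is precisely this degenerate case: if $Z$ is a linear subspace then every $d_i=1$, the series above reduces to $t^r$, the coefficient of $t^n$ vanishes for $n>r$, and the dual fails to be a hypersurface. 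I would therefore record that the conclusion is understood for $\deg Z>1$, which holds automatically whenever $Z$ is not linear.
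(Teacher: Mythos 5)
Your proposal is correct and follows essentially the same route as the paper: Proposition~\ref{prop:Cc} identifies $\zeta_I(t)$ with a complete-intersection zeta function, and then the positivity computation of Example~\ref{exa:CI} (via Proposition~\ref{prop:dual}) gives $\delta_0(Z)>0$, so $Z^\vee$ is a hypersurface. Your explicit treatment of the degenerate linear-subspace case matches the caveat already present in Example~\ref{exa:CI}, which the paper leaves implicit in the corollary.
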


The hypothesis on 
the number of hypersurfaces cutting out $Z$ forces the codimension of~$Z$ to 
be `small', since if $Z$ is cut out by $\le \dim Z$ hypersurfaces, then necessarily 
$\dim Z\ge n-\dim Z$, i.e., $\codim_Z\Pbb^n\le \frac 12n$. Thus Corollary~\ref{cor:dd}
lends some support to the `Duality defect conjecture', stating that the dual of a
nonsingular subvariety of low codimension in projective space should be a hypersurface.

The hypothesis that the Chern class of the normal bundle of $Z$ is a restriction
from the ambient space holds automatically if $Z$ is nonsingular of codimension~$2$,
by \cite[Theorem~2.2, Proposition~6.1]{MR0384816}. Analogous results for smooth
varieties in higher codimension, subject to delicate inequalities, may be found 
in~\cite{MR970088}.

Under more stringent hypotheses, one can conclude that $Z$ is actually a global
complete intersection. For example, consider the following condition on a
scheme~$Z$ of codimension~$r$, cut out by $m$ hypersurfaces:
\begin{quote}
(*) There exist hypersurfaces $X_i$, $i=1,\dots, m$, such that 
$Z=X_1\cap\cdots\cap X_m$ and that $Z$ is a component of the intersection
$X_{i_1}\cap\cdots\cap X_{i_r}$ for all $r$-tuples of distinct hypersurfaces
$X_{i_1},\dots, X_{i_r}$.
\end{quote}

\begin{prop}\label{prop:Ci}
Let $Z\subseteq \Pbb^n$ be an irreducible and reduced local complete intersection 
satisfying the hypotheses of Proposition~\ref{prop:Cc} {\em and\/} condition (*).
Then $Z$ is a complete intersection.
\end{prop}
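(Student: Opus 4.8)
The plan is to bootstrap from the Chern-class equality already established in Proposition~\ref{prop:Cc}, namely that after reordering the hypersurfaces one has $c(N_Z\Pbb^n) = \iota^*\prod_{i=1}^r c(N_{X_i}\Pbb^n)$ and $\deg Z = d_1\cdots d_r$, where $Z = X_1\cap\cdots\cap X_m$ with $m\le\dim Z$ and $d_i=\deg X_i$. The key numerical conclusion is that $Z$ has the degree of the complete intersection $X_1\cap\cdots\cap X_r$. Condition (*) is what allows us to promote this numerical coincidence into an honest scheme-theoretic (in fact set-theoretic) identity. First I would invoke condition (*) for the particular $r$-tuple $X_1,\dots,X_r$: by hypothesis $Z$ is a component of $Y:=X_1\cap\cdots\cap X_r$. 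Since $Y$ is the intersection of $r$ hypersurfaces of degrees $d_1,\dots,d_r$ in $\Pbb^n$, every component of $Y$ has codimension $\le r$; but $Z$ has codimension exactly $r$, so $Z$ is a top-codimension (hence isolated) component of $Y$, and by the affine-dimension/Krull-Bézout bound $Y$ is equidimensional of pure codimension $r$ wherever these components occur.

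The heart of the argument is a degree count. By Bézout's theorem the intersection product $X_1\cdots X_r$ has total degree $d_1\cdots d_r$, and this class is a nonnegative combination of the (fundamental classes of the) components of $Y$, each counted with its intersection multiplicity. Because $Z$ is irreducible and reduced and appears as a component of $Y$, its contribution to $X_1\cdots X_r$ is $(\text{mult}_Z Y)\cdot\deg Z$ with $\text{mult}_Z Y\ge 1$. Combining this with $\deg Z = d_1\cdots d_r$ from Proposition~\ref{prop:Cc} forces $\text{mult}_Z Y = 1$ and, crucially, forces $Z$ to account for \emph{all} of the degree of $Y$. Hence $Y$ can have no other components and no embedded structure along $Z$: set-theoretically $Y = Z$, the multiplicity of $Z$ in $Y$ is $1$, and $X_1,\dots,X_r$ meet transversally along $Z$ (equivalently, $Z$ is a reduced complete intersection cycle of $X_1,\dots,X_r$).

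It remains to pass from the cycle-level equality $Y = Z$ to the scheme-theoretic statement that $(F_1,\dots,F_r)$ is exactly the ideal of $Z$, i.e.\ that $X_1\cap\cdots\cap X_r$ is reduced and irreducible and equals $Z$ as a scheme. Here I would argue that since $Z$ is reduced and irreducible of codimension $r$, since $Z\subseteq Y$ scheme-theoretically (as $F_1,\dots,F_r$ vanish on $Z$), and since $Y$ has the same support as $Z$ with generic multiplicity $1$, the scheme $Y$ is generically reduced along $Z$; being a complete intersection it is Cohen--Macaulay, hence has no embedded components, so generic reducedness upgrades to reducedness everywhere. Therefore $Y = X_1\cap\cdots\cap X_r = Z$ as schemes, exhibiting $Z$ as a complete intersection. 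The main obstacle in this program is precisely this last upgrade from the numerical/cycle identity to the scheme-theoretic one: one must rule out embedded or thickened structure on $X_1\cap\cdots\cap X_r$ along $Z$, and this is exactly where the Cohen--Macaulayness of a complete intersection (unmixedness, no embedded primes) together with generic reducedness is indispensable. Condition (*) is essential because without it the chosen $X_1,\dots,X_r$ might meet in a scheme strictly larger than $Z$ even though the remaining hypersurfaces $X_{r+1},\dots,X_m$ cut $Z$ out scheme-theoretically.
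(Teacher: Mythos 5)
Your proof is correct and is essentially the paper's own argument: the paper proves this proposition by simply citing Proposition~\ref{prop:Cc} together with the refined B\'ezout theorem (\cite[Theorem~12.3]{85k:14004}), and your argument is exactly that citation unwound --- condition (*) applied to the $r$-tuple singled out by Proposition~\ref{prop:Cc}, the degree count forcing $Z$ to be the unique component of $X_1\cap\cdots\cap X_r$ with multiplicity one, and then Cohen--Macaulayness plus generic reducedness upgrading the cycle-level equality to the scheme-theoretic one. One small point of attribution: the fact that \emph{every} irreducible component of $X_1\cap\cdots\cap X_r$ (including possible excess components of codimension $<r$, which classical B\'ezout with intersection multiplicities does not cover) contributes positively to $d_1\cdots d_r$ is precisely the content of the \emph{refined} B\'ezout theorem, so that is the result you should cite for your ``nonnegative combination'' step.
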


This follows from Proposition~\ref{prop:Cc} and the refined B\'ezout theorem
(\cite[Theorem~12.3]{85k:14004}).
Condition~(*) is verified if $\codim_Z{\Pbb^n}=2$, or if the $m$ hypersurfaces all
have the same degree, as can be verified easily.

Requirements such as condition (*) may be bypassed, again subject to certain
inequalities involving $r, m, n$ (see~\cite{MR604835}, \cite{MR970088}).


\end{document}